\documentclass{amsart}
\usepackage{amsmath,graphicx,amsfonts,subcaption,comment,amssymb,amsthm,xcolor,hyperref} 
\usepackage{booktabs} 

\usepackage[
  backend=biber,
  style=alphabetic,
  sorting=nyt,
  maxnames=99,
  giveninits=true,
  eprint=true
]{biblatex}
\renewbibmacro{in:}{}

\definecolor{prussianblue}{rgb}{0.1,0.2,0.6}

\newtheorem{corollary}{Corollary}

\def\ti{\mathrm{i}}
\def\Li{\operatorname{Li}}
\def\Re{\operatorname{Re}}
\def\Im{\operatorname{Im}}
\def\euler{\genfrac{\langle}{\rangle}{0pt}{}}

\title{A new product formula for $(z;q)_\infty$,\\ with applications to asymptotics}

\begin{document}

\author{Arash Arabi Ardehali}
\address{Department of Physics\\ Sharif University of Technology\\
P.O. Box 11155-9161, Tehran, Iran}
\email{a.a.ardehali@gmail.com}
\author{Hjalmar Rosengren}
\address
{Department of Mathematical Sciences
\\ Chalmers University of Technology and University of Gothenburg\\SE-412~96 G\"oteborg, Sweden}
\email{hjalmar@chalmers.se}

\begin{abstract}
    \noindent We express the $q$-Pochhammer symbol $(z;q)_\infty$ as an infinite product of gamma functions, analogously to how Narukawa expressed the elliptic gamma function as an infinite product of hyperbolic gamma functions. This identity is used to obtain  asymptotic expansions when  $q$ tends to $1$.
\end{abstract}

\maketitle

\tableofcontents

\section{Introduction}
The infinite $q$-Pochhammer symbol
\begin{equation}\label{q-pochhammer}(z;q)_\infty=\prod_{j=0}^\infty(1-zq^j)\end{equation}
plays a fundamental role for $q$-deformations of special functions.
For instance, the limits
\begin{align*}\lim_{q\rightarrow 1} (z(q-1);q)_\infty&=e^z,\\
\lim_{q\rightarrow 1}\frac{(q;q)_\infty(1-q)^{1-z}}{(q^z;q)_\infty}&=\Gamma(z),\\
\lim_{q\rightarrow 1}(q-1)\log (z;q)_\infty&=\Li_2(z):=\sum_{n=1}^\infty\frac{z^n}{n^2},
\end{align*}
show that it can be viewed as a $q$-analogue of the exponential function, the gamma function or the dilogarithm, depending on how the variable $z$ is rescaled.

The main result of the present work is the identity
\begin{multline}\label{main-identity-convergent}
    (e^{-y};e^{-\beta})_\infty
    = \exp\left(\frac{\beta(1-{{\coth(y/2)}})}{24}-\frac{\Li_2(e^{-y})}\beta\right) (1-e^{-y})^{1/2}\\
   \times \prod_{n\in\mathbb
    {Z}}\left(\frac{\sqrt{2\pi}}{\Gamma\left(\frac{y+2\pi \ti n}{\beta}\right)} \left(\frac{y+2\pi \ti n}{\beta}\right)^{\frac{y+2\pi \ti n}{\beta}-\frac 12} \exp\left({\frac{\beta}{12(y+2\pi\ti  n)}-\frac{y+2\pi \ti n}{\beta}}\right)\right),
\end{multline}
which holds for $\Re \beta>0$ and $y\notin 2\pi\ti\mathbb Z$. 
As we discuss below (see \eqref{dedekind}), the case $y=\beta$  is essentially
 the modular transformation 
 for Dedekind's eta function. The identity
 \eqref{main-identity-convergent} generalizes this  transformation to the non-modular function $(e^{-y};e^{-\beta})_\infty$.

 The dilogarithm and non-integer powers on the right-hand side of \eqref{main-identity-convergent} have branch  
 points at $y\in 2\pi\ti\mathbb Z$. 
 The identity holds for the standard branches,
with $\Li_2(e^{-y})$ and $(1-e^{-y})^{1/2}$  positive for $y>0$ and $((y+2\pi\ti n)/\beta)^{(y+2\pi\ti n)/\beta-1/2}$ positive for $(y+2\pi\ti n)/\beta>0$. 
Since the left-hand side is entire in $y$,  the total monodromy around each branch point vanishes, which can be verified by direct computation. 

The identity \eqref{main-identity-convergent} is closely related to 
Stirling's approximation 
\begin{equation}\label{stirling}\log\Gamma(x)=\left(x-\frac 12\right)\log(x)-x+\frac 12\log(2\pi)+\frac 1{12 x}+f(x),\end{equation}
where  $f(x)=\mathcal O(x^{-3})$ as $x\rightarrow\infty$ in any sector avoiding the negative axis. Indeed, the product on the right-hand side of 
\eqref{main-identity-convergent} is 
\begin{equation}\label{exp-sum-f}\exp\left(-\sum_{n\in\mathbb Z} f\left(\frac{y+2\pi\ti n}\beta\right)\right).\end{equation}
In particular, it is absolutely convergent.
We will give two proofs of \eqref{main-identity-convergent}, using different expressions for $f$.
Binet's identity \cite[Eq.\ (1.7.23)]{bateman} 
\begin{equation}\label{binet}f(x)=-\frac 1{12x}+\int_0^\infty \left(\frac 1{1-e^{-t}}-\frac 1t-\frac 12\right)\frac{e^{-tx}}{t}\,dt, \end{equation}
can be used to compute the Fourier transform of $f$. 
A straight-forward application of Poisson's summation formula then leads to \eqref{main-identity-convergent}, see \S \ref{sec:poisson-proof}.
In \S \ref{sec:proof} we give an alternative proof based on Artin's identity  \cite[Eq.\ (3.9)]{Artin}
$$f(x)=-\frac 1{12x}+ \sum_{n=0}^\infty\left(\left(x+n+\frac 12\right)\log\frac {x+n+1}{x+n}-1\right).$$

The formula \eqref{main-identity-convergent} expresses the left-hand side as an infinite product of reciprocal gamma functions (dressed with additional factors), analogously to how Narukawa \cite{narukawa2004modular} expressed the elliptic gamma function as an infinite product of dressed hyperbolic gamma functions, see \eqref{narukawa}. In fact, as we will show in Appendix \ref{app:narukawa}, the identity~\eqref{main-identity-convergent} can
at least formally
be obtained as a limit of Narukawa's identity. However, this involves a non-trivial regularization of the terms, and working out the details turns out to be more complicated than proving \eqref{main-identity-convergent} from scratch. 

The two mentioned product formulas have
the following quantum field theory (QFT) interpretation. Narukawa's formula  expresses the BPS partition function of a 4d $\mathcal{N}=1$ multiplet on $S^3\times S^1$ as an infinite product of the BPS partition functions on $S^3$ of the 3d $\mathcal{N}=2$ chiral multiplets arising from Fourier (or in physics terminology Kaluza--Klein) expansion of the 4d chiral multiplet around $S^1$, see e.g.~\cite{Aharony:2013dha}. Our formula analogously expresses the BPS partition function of a 3d $\mathcal{N}=2$ chiral multiplet on $D^2\times S^1$ (\cite{Beem:2012mb,Yoshida:2014ssa}) as an infinite product of the BPS partition functions on $D^2$ of the 2d $\mathcal{N}=(2,2)$ chiral multiplets arising from Fourier (or Kaluza--Klein) expansion of the 3d chiral multiplet around $S^1.$ A formula following from \eqref{main-identity-convergent} by taking ratios of the two sides at different $y$'s, expressing the $S^2\times S^1$ partition function as an infinite product of $S^2$ partition functions, was given without proof in Eq.~(A.21) of \cite{Aharony:2017adm}.
We explain in Appendix~\ref{app:narukawa} how a geometric degeneration of $S^3\times S^1$ to $D^2\times S^1$ underlies the reduction of Narukawa's formula to our identity.


Our main motivation for seeking identities of the form \eqref{main-identity-convergent} has been the need to study the $q\to1$ asymptotics of $(z;q)_\infty$, where $z$ is allowed to vary with $q$. In particular, the equivalent identity \eqref{main-identity} is used in \cite{ArabiArdehali:2025bub} to analyze the $q \to 1$ limit of basic hypergeometric
integrals (see e.g.\ 
\cite{Krattenthaler:2011da,Gahramanov:2016wxi,rosengren2018rahman} for examples relevant to the physical context described above).
This parallels the role played by Narukawa’s identity in
\cite{Rains:2006dfy,ArabiArdehali:2018ngl,Ardehali:2021irq}
in the study of the hyperbolic limit of elliptic hypergeometric integrals.
In \S \ref{sec:asymptotics}, we will use \eqref{main-identity-convergent} to obtain a uniform
asymptotic expansion of $(e^{-x};e^{-\beta})_\infty$
as $\beta\rightarrow 0$. This is then specialized to give 
the complete asymptotics of
$\log (e^{-x\beta^c};e^{-\beta})_\infty$, where $c\geq 0$. The cases $c=0$ and $c=1$ have been previously
studied in the literature, as we review below. By contrast, our results for the regimes
$0 < c < 1$ and $c > 1$ appear to be new.
Among these, the value $c = \tfrac{1}{2}$ is of particular interest, since a central
conjecture of \cite{ArabiArdehali:2025bub} asserts that the dominant scaling regimes in the
$q \to 1$ asymptotics of basic hypergeometric integrals correspond precisely to
$c \in \{0, \tfrac{1}{2}, 1\}$. We hope that the results obtained here will contribute to a
resolution of this conjecture as well. In \S \ref{sec:numerics} we
study the truncation error in our asymptotic expansions. This part of the paper is not mathematically rigorous, but based on heuristic arguments and computer experiments.



\section{First proof of the main identity}
\label{sec:poisson-proof}

Our first proof of \eqref{main-identity-convergent} is based on
Poisson's summation formula
\begin{equation}\label{poisson}
\sum_{n\in\mathbb Z}\, \int_{\mathbb R} \phi(t)e^{-2\pi \ti n t} dt =  \sum_{n\in\mathbb Z} \phi(n).
\end{equation}
A sufficient condition for \eqref{poisson} to hold is that $\phi$ is continuous and the terms on both sides decay as $\mathcal O(|n|^{-1-\varepsilon})$ for some $\varepsilon>0$ \cite[\S 8.32]{folland}. If $\phi$ is defined on $\mathbb R_{\geq 0}$, applying \eqref{poisson} to the even extension of $\phi$ gives 
\begin{equation}\label{poisson-even}
\sum_{n\in\mathbb Z}\, \int_{0}^\infty \phi(t)e^{-2\pi \ti n t} dt =\frac{\phi(0)}2 + \sum_{n=1}^\infty \phi(n).
\end{equation}
We will use this with
$$\phi(t)=\left(\frac 1{\beta t}+\frac 12+\frac{\beta t}{12}-\frac 1{1-e^{-\beta t}}\right)\frac{e^{- y t }}{t},$$
where we  assume $\beta>0$ and $\Re y>0$.
Note that  $\phi$ is continuous at $0$ with $\phi(0)=0$ and decays exponentially at $+\infty$.

By Binet's identity \eqref{binet}, the integral in
\eqref{poisson-even} equals $-f((y+2\pi\ti n)/\beta)$. In particular, the terms on the left decay as $\mathcal O(n^{-3})$, so \eqref{poisson-even} is valid.
On the right-hand side we use
\[
\sum_{n=1}^\infty \frac{e^{- n y}}{n(1-e^{-\beta n})} = \sum_{n=1}^\infty \sum_{k=0}^\infty \frac{e^{- n(y+\beta k)}}{n} = -\sum_{k=0}^\infty \log (1 - e^{-y-\beta k}).
\]
The other sums appearing are elementary, and after rearranging the terms we obtain
\begin{multline}\label{main-identity-log}\sum_{k=0}^\infty \log (1 - e^{-y-\beta k})\\
=-\frac{\Li_2(e^{- y})}\beta+\frac{\log(1-e^{- y})}{2}+\frac{\beta(1-\coth( y/2))}{24}-\sum_{n\in\mathbb Z}f\left(\frac{y+2\pi\ti n}\beta\right).\end{multline}
Exponentiating this identity gives \eqref{main-identity-convergent}, assuming $\beta >0$ and $\Re y>0$.

 It is known
that $|f(x)|\leq C|x|^3$, uniformly in any sector $|\arg(x)|\leq \phi<\pi$. It follows that the final sum in \eqref{main-identity-log} converges locally uniformly for 
$\Re(\beta)>0$ and $y\notin 2\pi \ti\mathbb Z$. Hence, the limit function is analytic in this domain, apart from possible branch points at $y\in 2\pi\ti \mathbb Z$. It then follows by analytic continuation that 
 \eqref{main-identity-convergent} holds in the larger parameter domain.

\section{Alternative proof of the main identity}\label{sec:proof}

We will now give an alternative proof of \eqref{main-identity-convergent},
which is  longer but more elementary as it does not involve Fourier analysis or analytic continuation. We start from Artin's identity \eqref{artin} in the form 
$$f(x)=\sum_{m=0}^\infty\phi(x+m), $$
where
$$\phi(x)=\left(x+\frac 12\right)\log\left(\frac{x+1}{x}\right)-1+\frac 1{12(x+1)}-\frac 1{12x}. $$
This holds for $x\notin\mathbb R_{\leq 0}$.
Since $\phi(x)=\mathcal O(x^{-4})$, the double sum
$$S=\sum_{n=-\infty}^\infty \sum_{m=0}^\infty\phi\left(\frac{y+2\pi\ti n}\beta+m\right)$$
  is absolutely convergent and we can interchange the  summations.
That is, 
$$S=\sum_{m=0}^\infty T(y+m\beta), $$
where
$$T(y)=\sum_{n\in\mathbb Z} \phi\left(\frac{y+2\pi\ti n}\beta\right).$$

To compute $T$, we  split $\phi=\phi_1+\phi_2$, where 
$$\phi_1(x)=\left(x+\frac 12\right)\log\left(\frac{x+1}{x}\right)-1,\qquad \phi_2(x)=\frac 1{12(x+1)}-\frac 1{12x}. $$
We apply the integral representation
$$\phi_1(x)=\frac 12\int_0^1u\left(\frac 1{2x+1-u}-\frac 1{2x+1+u}\right)\,du. $$
Interchanging sum and integal, which is allowed by Fubini's theorem, gives
\begin{align*}T_1(y)&=\sum_{n\in\mathbb Z} \phi_1\left(\frac{y+2\pi\ti n}\beta\right)\\
&=\frac 1 2\int_0^1 u\sum_{n\in\mathbb Z} \left(\frac 1{\frac{2(y+2\pi\ti n)}{\beta}+1-u}-\frac 1{\frac{2(y+2\pi\ti n)}{\beta}+1+u}\right)\,du\\
&=\frac\beta 8\int_0^1u\left(\coth\left(\frac{2y+\beta(1-u)}{4}\right)-\coth\left(\frac{2y+\beta(1+u)}{4}\right)\right)\,du,
\end{align*}
where we used \eqref{coth-parfrac} in the last step.
The integrand has the primitive function
\begin{multline*}\frac 8{\beta^2}\left(\Li_2\left(e^{-y-\frac{\beta(1+u)}2}\right)-\Li_2\left(e^{-y-\frac{\beta(1-u)}2}\right)\right)  \\
-\frac {4u}\beta\left(\log\left(1-e^{-y-\frac{\beta(1+u)}2}\right)+\log\left(1-e^{-y-\frac{\beta(1-u)}2}\right)\right),\end{multline*}
which leads to
$$T_1(y)=\frac 1\beta\left(\Li_2(e^{-y-\beta})-\Li_2(e^{-y})\right)-\frac 12\left(\log(1-e^{-y-\beta})+\log(1-e^{-y})\right). $$
Another application of \eqref{coth-parfrac} gives
$$T_2(y)=\sum_{n\in\mathbb Z} \phi_2\left(\frac{y+2\pi\ti n}\beta\right)=\frac\beta{24}\left(\coth\left(\frac{y+\beta}2\right)-\coth\left(\frac y2\right)\right). $$

We have now proved that
$$S=\sum_{m=0}^\infty (T_1+T_2)(y+m\beta)= \sum_{m=0}^\infty (a_{m+1}+a_m+b_{m+1}-b_m),$$
where 
$$a_m=-\frac 12\log(1-e^{-y-m\beta}), $$
$$b_m=\frac 1\beta\Li_2(e^{-y-m\beta})+\frac\beta{24}\coth\left(\frac{y+m\beta}{2}\right).$$
We rewrite the partial sums as
$$\sum_{m=0}^N (a_{m+1}+a_m+b_{m+1}-b_m)=a_{N+1}+b_{N+1}-a_0-b_0+2\sum_{m=0}^Na_m$$
and then let $N\rightarrow\infty$.
This gives
$$S=\frac{\beta(1-\coth(y/2))}{24}+\frac 12\log(1-e^{-y})-\frac 1\beta\Li_2(e^{-y})-\sum_{m=0}^\infty\log(1-e^{-y-m\beta}),  $$
which is equivalent to \eqref{main-identity-convergent}.

\section{Applications}
\label{sec:asymptotics}

One can improve the convergence in \eqref{exp-sum-f}
by replacing $f=f_1$ with a higher order remainder 
term $f_N(x)=\mathcal O(x^{-2N-1})$
in Stirling's asymptotic formula
\begin{equation}\label{stirling-general}\log\Gamma(x)=\left(x-\frac 12\right)\log(x)-x+\frac 12\log(2\pi)+\sum_{k=1}^N\frac{B_{2k}}{2k(2k-1)x^{2k-1}}+f_N(x),
\end{equation}
where $B_j$ are Bernoulli numbers. Each term leads to an additional factor
$$\exp\left(\frac{B_{2k}}{2k(2k-1)}\sum_{n\in\mathbb Z}\left( \frac \beta{y+2\pi\ti n}\right)^{2k-1}\right)
=\exp\left( \frac{B_{2k}}{(2k)!}\,\Li_{2-2k}(e^{-y})\beta^{2k-1}\right),$$
where the identity holds for $k\geq 2$, see \eqref{polylog-parfrac}. 
It follows that 
\begin{multline}\label{fast-convergence-cotangent}
    (e^{-y};e^{-\beta})_\infty
    = \exp\left(\frac{\beta(1-\coth(y/2))}{24}+\frac 12\log(1-e^{-y})-\frac{\Li_2(e^{-y})}\beta\right.\\
    \left.-\sum_{k=2}^N\frac{B_{2k}\Li_{2-2k}(e^{-y})\beta^{2k-1}}{(2k)!}
    -\sum_{n\in\mathbb Z}f_N\left(\frac{y+2\pi\ti n}\beta\right)
    \right),
\end{multline}
Using that $\coth(y/2)=1+2\Li_0(e^{-y})$ and $\log(1-e^{-y})=-\Li_1(e^{-y})$,
this can be written  compactly as follows. The special case $N=1$ 
is  \eqref{main-identity-convergent}, whereas  $N\geq 2$ gives equivalent identities with improved convergence. 
Note that $B_{k}=0$ if $k\geq 3$ is odd.

\begin{corollary}
For $N\in\mathbb Z_{>0}$, $\Re \beta >0$ and $y\notin 2\pi \ti\mathbb Z$, 
\begin{equation}\label{fast-convergence}
    (e^{-y};e^{-\beta})_\infty
    = \exp\left(\sum_{k=0}^{2N}\frac{B_{k}\Li_{2-k}(e^{-y})(-\beta)^{k-1}}{k!}-\sum_{n\in\mathbb Z}f_N\left(\frac{y+2\pi\ti n}\beta\right)\right),
\end{equation}
where $f_N$ is defined by \eqref{stirling-general}. 
\end{corollary}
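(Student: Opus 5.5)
The plan is to deduce \eqref{fast-convergence} from the case $N=1$, which is \eqref{main-identity-convergent} in its logarithmic form \eqref{main-identity-log}, by trading $f=f_1$ for $f_N$. Comparing \eqref{stirling} with \eqref{stirling-general}, and using $B_2=1/6$, gives
$$f_1(x)=f_N(x)+\sum_{k=2}^N\frac{B_{2k}}{2k(2k-1)x^{2k-1}}.$$
We substitute this with $x=(y+2\pi\ti n)/\beta$ into the final sum of \eqref{main-identity-log}. For $k\ge 2$ each series $\sum_n (y+2\pi\ti n)^{1-2k}$ converges absolutely. The same holds for $\sum_n f_N((y+2\pi\ti n)/\beta)$, since $f_N(x)=\mathcal O(x^{-2N-1})$ uniformly in sectors $|\arg x|\le\phi<\pi$. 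Because $\Re\beta>0$, all points $(y+2\pi\ti n)/\beta$ with large $|n|$ lie in such a sector. Hence the sum of $f_1$ splits termwise into the sum of $f_N$ plus finitely many convergent sums.

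Next we evaluate these sums by the partial fraction identity \eqref{polylog-parfrac}:
$$\sum_{n\in\mathbb Z}(y+2\pi\ti n)^{-m}=\frac{(-1)^{m}}{(m-1)!}\,\Li_{1-m}(e^{-y}),\qquad m\ge 2.$$
One can check this by differentiating $\sum_n (y+2\pi\ti n)^{-1}=\tfrac12\coth(y/2)=\tfrac12+\Li_0(e^{-y})$ a total of $m-1$ times, using $\frac{d}{dy}\Li_s(e^{-y})=-\Li_{s-1}(e^{-y})$. With $m=2k-1$ this gives
$$\frac{B_{2k}}{2k(2k-1)}\beta^{2k-1}\sum_n(y+2\pi\ti n)^{1-2k}=-\frac{B_{2k}}{(2k)!}\beta^{2k-1}\Li_{2-2k}(e^{-y}).$$
Inserting this into \eqref{main-identity-log} yields \eqref{fast-convergence-cotangent}. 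Keeping track of the sign here is the main thing to get right.

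Finally we rewrite the elementary terms as the $k=0,1,2$ summands of $\sum_{k=0}^{2N}B_k\Li_{2-k}(e^{-y})(-\beta)^{k-1}/k!$, taking $B_0=1$, $B_1=-1/2$, $B_2=1/6$:
\begin{itemize}
\item $k=0$ gives $-\Li_2(e^{-y})/\beta$;
\item $k=1$ gives $-\tfrac12\Li_1(e^{-y})=\tfrac12\log(1-e^{-y})$;
\item $k=2$ gives $-\tfrac{\beta}{12}\Li_0(e^{-y})=\tfrac{\beta}{24}(1-\coth(y/2))$, using $\coth(y/2)=1+2\Li_0(e^{-y})$.
\end{itemize}
For even $k=2j\ge4$ we have $(-\beta)^{k-1}=-\beta^{2j-1}$, which matches the terms from the previous step. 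For odd $k\ge3$ the summands vanish because $B_k=0$.

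Exponentiating then gives \eqref{fast-convergence} for $\Re\beta>0$ and $y\notin2\pi\ti\mathbb Z$, since \eqref{main-identity-convergent} holds on that whole domain. No analytic continuation is needed beyond what was already used for $N=1$.
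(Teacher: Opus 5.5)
Your route is the paper's own: write $f_1=f_N+\sum_{k=2}^N B_{2k}x^{1-2k}/(2k(2k-1))$, put this into \eqref{main-identity-log}, evaluate the new sums with \eqref{polylog-parfrac}, and identify the $k=0,1,2$ terms via $\coth(y/2)=1+2\Li_0(e^{-y})$ and $\log(1-e^{-y})=-\Li_1(e^{-y})$. Your convergence remarks and your handling of $k=0,1,2$ and of odd $k$ are fine.

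However, the sign step you flagged as the main thing to get right is wrong as written. Differentiating $\sum_n (y+2\pi\ti n)^{-1}=\frac12+\Li_0(e^{-y})$ a total of $m-1$ times gives $(-1)^{m-1}(m-1)!\sum_n(y+2\pi\ti n)^{-m}$ on the left and $(-1)^{m-1}\Li_{1-m}(e^{-y})$ on the right. The signs cancel:
\[
\sum_{n\in\mathbb Z}(y+2\pi\ti n)^{-m}=\frac{\Li_{1-m}(e^{-y})}{(m-1)!},\qquad m\geq 2.
\]
This is \eqref{polylog-parfrac}, with no factor $(-1)^m$. As a check, for $m=3$ and small $y>0$ both sides are $\approx y^{-3}>0$, whereas your version gives $\approx -y^{-3}$. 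So your formula fails precisely for the odd $m=2k-1$ you need. The correct evaluation is
\[
\frac{B_{2k}}{2k(2k-1)}\,\beta^{2k-1}\sum_{n\in\mathbb Z}(y+2\pi\ti n)^{1-2k}=+\frac{B_{2k}}{(2k)!}\,\beta^{2k-1}\Li_{2-2k}(e^{-y}).
\]

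The minus sign in front of $\sum_n f_1$ in \eqref{main-identity-log} is what turns this into the term $-B_{2k}\Li_{2-2k}(e^{-y})\beta^{2k-1}/(2k)!=B_{2k}\Li_{2-2k}(e^{-y})(-\beta)^{2k-1}/(2k)!$ in \eqref{fast-convergence-cotangent} and \eqref{fast-convergence}. If you substituted your displayed identity faithfully into \eqref{main-identity-log}, every $k\geq 2$ term would come out with the wrong sign. You reach the correct statement only because, in the last step, you matched the right-hand side of your display directly with the summand, dropping that minus sign as well. The two slips cancel. Correct the partial-fraction sign and keep the minus sign, and the argument is complete and coincides with the paper's.
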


We can also  replace $f_1$ by $f_0$, which corresponds to deleting from \eqref{main-identity-convergent} the factor
$$\exp\left(\operatorname{PV}\sum_{n\in\mathbb Z}\frac{\beta}{12(y+2\pi\ti n)}
\right)=\exp\left(\frac{\beta\coth(y/2)}{24}\right),
$$
see \eqref{coth-parfrac}. This gives
\begin{multline}\label{main-identity}
    (e^{-y};e^{-\beta})_\infty
    = (1-e^{-y})^{1/2}\exp\left(\frac{\beta}{24}-\frac{\mathrm{Li}_2(e^{-y})}{\beta}\right)\\
    \times\operatorname{PV}\ \prod_{n\in\mathbb
    {Z}}\left(\frac{\sqrt{2\pi}} {\Gamma(\frac{y+2\pi \ti n}{\beta})}\left(\frac{y+2\pi \ti n}{\beta}\right)^{\frac{y+2\pi \ti n}{\beta}-\frac 12} e^{-\frac{y+2\pi \ti n}{\beta}}\right),
\end{multline}
which was announced and used in \cite{ArabiArdehali:2025bub}.

An  interesting consequence of \eqref{main-identity} follows by 
taking the quotient of the original identity and the one obtained after replacing $y$ with $y+\beta$. Both sides simplify and we obtain 
\begin{equation}\label{interesting-consequence}
 \prod_{n\in\mathbb Z}e^{-1}\left(1+\frac{\beta}{y+2\pi \ti n}\right)^{\frac{y+2\pi \ti n}\beta+\frac 12}=\frac{\exp\left(\frac{\mathrm{Li}_2(e^{-y-\beta})-\mathrm{Li}_2(e^{-y})}\beta\right)}{(1-e^{-y})^{1/2}(1-e^{-y-\beta})^{1/2}}.
\end{equation}
Since the factors are $1+\mathcal O(n^{-2})$, there is no need to take the principal value.  This can be compared with the product version of Artin's identity \eqref{artin},
\begin{equation}\label{artin}\prod_{n=0}^\infty e^{-1}\left(1+\frac 1{x+n}\right)^{x+n+\frac 12}=\frac{\Gamma(x)e^x x^{\frac 12-x}}{\sqrt{2\pi}}.\end{equation}

The case $y=\beta$ of
\eqref{main-identity} is also interesting. 
Using
$$\frac 1{\Gamma\left(1+\frac{2\pi\ti n}\beta\right)\Gamma\left(1-\frac{2\pi\ti n}\beta\right)}=\frac{\beta}{4\pi^2 n}(e^{2\pi^2n/\beta}-e^{-2\pi^2n/\beta}),\qquad n\neq 0,$$
and
$$\prod_{n\neq 0}e^{-1}\left(1+\frac{\beta}{2\pi \ti n}\right)^{\frac{2\pi \ti n}\beta+\frac 12}=\frac{\exp\left(1-\frac{\pi^2}{6\beta}+\frac{\mathrm{Li}_2(e^{-\beta})}\beta\right)}{\beta^{1/2}(1-e^{-\beta})^{\frac 12}},$$
which is the limit case $y\rightarrow 0$
of \eqref{interesting-consequence}, gives after a short computation
\begin{equation}\label{dedekind}(e^{-\beta};e^{-\beta})_\infty=\sqrt{\frac{2\pi}\beta}\,e^{\frac{\beta}{24}-\frac{\pi^2}{6\beta}}(e^{-4\pi^2/\beta};e^{-4\pi^2/\beta})_\infty.\end{equation}
This is the modular transformation for Dedekind's eta function.
In a similar way, one obtains  modular transformations for Jacobi theta functions (cf. \eqref{theta-modular})
by multiplying two instances of \eqref{main-identity}.

We now turn to applications to asymptotics. 
We will formulate the results in terms of
$\log(e^{-y};e^{-\beta})_\infty$, which requires
a careful choice of branches. Note first that
$(e^{-y};e^{-\beta})_\infty$ vanishes for
$y\in 2\pi\ti\mathbb Z+\beta \mathbb Z_{\leq 0}$. Hence, it has a well-defined logarithm outside the cuts
$ 2\pi\ti \mathbb Z+\beta \mathbb R_{\leq 0}$.
We pick the branch that vanishes as $y\rightarrow\infty$,
 given  by
the left-hand side of \eqref{main-identity-log}
for $\Re y>0$. 
It follows from that identity that we
can take the logarithm of
\eqref{fast-convergence} in the sense that
\begin{equation}\label{log-product}\log
(e^{-y};e^{-\beta})_\infty
    = \sum_{k=0}^{2N}\frac{B_{k}\Li_{2-k}(e^{-y})(-\beta)^{k-1}}{k!}-\sum_{n\in\mathbb Z}f_N\left(\frac{y+2\pi\ti n}\beta\right),\end{equation}
    where we choose branches so that each term on the right vanishes as $y\rightarrow\infty$.
    If we want to use \eqref{log-product}
    for $\Re y<0$, we need to continue $\Li_1(e^{-y})$ and $\Li_2(e^{-y})$ analytically to $ y\notin 2\pi\ti \mathbb Z+\beta \mathbb R_{\leq 0}$. This is different from the standard branches,
which are analytic for $y\notin 2\pi\ti \mathbb Z+ \mathbb R_{\leq 0}$.  

We will use \eqref{log-product} to study the asymptotics of 
$\log (e^{-y};e^{-\beta})_\infty$ when $\beta\rightarrow 0$ and $y$ may depend on $\beta$. 
Note that if
  $\beta$ approaches zero on a path tangential to the imaginary axis, then $(y+2\pi\ti n)/\beta$
 has argument close to $\pi$ for large $|n|$. We are then leaving the region where \eqref{stirling-general} is valid. To avoid such complications, we will assume that
 $\arg \beta\in(-\pi/2,\pi/2)$ is fixed. 
By periodicity in $y$, we can also assume that $|\Im y|\leq \pi$. The following result gives a uniform asymptotic expansion in this situation.

\begin{corollary}\label{corollary-uniform}
Fix $\theta$ 
and  $Y$ with $|\theta|<\pi/2$ and $0<Y<\pi|\cot\theta|$
(if $\theta=0$ we only require $Y>0$). 
Then, in the region
\begin{equation}\label{beta-y-region}|\Im y |\leq \pi,\quad \Re y \geq -Y,\quad y\notin\beta\mathbb R_{\leq 0},\quad \arg\beta=\theta,\end{equation}
one has the following uniform asymptotic expansion as $\beta\rightarrow 0$:
\begin{multline}\label{uniform-asymptotics}
    \log(e^{-y};e^{-\beta})_\infty
    \sim-\frac{\Li_2(e^{-{y}})}{\beta}+\frac{\log(1-e^{-{y}})}{2}+\frac{\log2\pi}{2}-\frac{{y}}{\beta}+\left(\frac{{y}}{\beta}-\frac{1}{2}\right)\log\left(\frac{{y}}{\beta}\right)\\
    -\log\Gamma\left(\frac{{y}}{\beta}\right)-\sum_{k=1}^{\infty}\frac{B_{2k}\beta^{2k-1}}{(2k)!}\left(\Li_{2-2k}\left(e^{-y}\right)-\frac{(2k-2)!}{{y}^{2k-1}}\right).
\end{multline}
Here, $\Li_2(e^{-{y}})$ and $\log(1-e^{-{y}})$
denote branches that are real-valued for $y>0$
and analytic for $y\notin\beta \mathbb R_{\leq 0}$, whereas $\log(y/\beta)$ and $\log\Gamma(y/\beta)$
denote branches that are 
real-valued for $y/\beta>0$ and analytic for 
 $y\notin\beta \mathbb R_{\leq 0}$.
\end{corollary}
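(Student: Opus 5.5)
The plan is to start from the exact identity \eqref{log-product} with a fixed but arbitrary $N$, and to separate the $n=0$ term of the sum over $n$ from the terms with $n\neq 0$. The $n=0$ term is $-f_N(y/\beta)$. By \eqref{stirling-general} it equals
$$-\log\Gamma(y/\beta)+\left(\tfrac{y}{\beta}-\tfrac12\right)\log\tfrac{y}{\beta}-\tfrac{y}{\beta}+\tfrac12\log 2\pi+\sum_{k=1}^N\frac{B_{2k}\beta^{2k-1}}{2k(2k-1)y^{2k-1}}.$$
This is an exact expression, and it is valid for $y/\beta\notin\mathbb R_{\le0}$. We keep it in closed form and do not expand it, since $y/\beta$ need not be large in the region \eqref{beta-y-region}. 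That is why $\log\Gamma(y/\beta)$ appears explicitly in \eqref{uniform-asymptotics}.

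Next, $\frac{B_{2k}}{2k(2k-1)}=\frac{B_{2k}(2k-2)!}{(2k)!}$. So these terms combine with the terms $k\ge 2$ (even index) of the polylogarithm sum in \eqref{log-product} to give the truncated series $-\sum_{k=1}^N \frac{B_{2k}\beta^{2k-1}}{(2k)!}\bigl(\Li_{2-2k}(e^{-y})-\frac{(2k-2)!}{y^{2k-1}}\bigr)$. The case $k=1$ needs care, because the coefficient of $\beta$ involves $\Li_0(e^{-y})$. The terms $k=0,1$ of \eqref{log-product} give $-\Li_2/\beta-\tfrac12\Li_1=-\Li_2/\beta+\tfrac12\log(1-e^{-y})$, which are the leading terms of \eqref{uniform-asymptotics}. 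I also have to check the branch conventions: $\Li_1,\Li_2$ are continued analytically off $\beta\mathbb R_{\le0}$, as discussed after \eqref{log-product}, and the branches of $\log(y/\beta)$ and $\log\Gamma(y/\beta)$ match those in the corollary. This is legitimate because each expression is analytic in the slit region and real for $y>0$.

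With these identifications, the claim reduces to the following uniform remainder bound:
$$\Bigl|\sum_{n\neq0} f_N\Bigl(\frac{y+2\pi\ti n}{\beta}\Bigr)\Bigr| \le C|\beta|^{2N+1}$$
uniformly in the region \eqref{beta-y-region}. Taking $N$ arbitrary then gives the full asymptotic expansion, since the term $k=N+1$ is itself $O(\beta^{2N+1})$ once $y$ is bounded away from $0$. Near $y=0$ the combination $\Li_{2-2k}(e^{-y})-(2k-2)!/y^{2k-1}$ is regular, because $\Li_{2-2k}(e^{-y})$ has exactly that pole at $0$. So each coefficient is bounded uniformly on the compact part of the strip, and the region is unbounded only to the right, where $\Li_{2-2k}(e^{-y})\to0$. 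To prove the bound, I would use $|f_N(x)|\le C_\phi |x|^{-2N-1}$ uniformly in a sector $|\arg x|\le\phi<\pi$. Granting that the arguments lie in such a sector, the sum is at most $C|\beta|^{2N+1}\sum_{n\ne0}|y+2\pi\ti n|^{-2N-1}$. This sum is bounded uniformly because $|\Im y|\le\pi$ gives $|y+2\pi\ti n|\ge\pi(2|n|-1)$.

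The main obstacle is the sector condition. We need $(y+2\pi\ti n)/\beta$ to stay in a fixed sector avoiding the negative axis, uniformly for $n\neq0$ and all $y$ in the region. Write $\arg\beta=\theta$. The point $y+2\pi\ti n$ has imaginary part of modulus at least $\pi$ and real part at least $-Y$. Suppose $\theta>0$; the sign of $\theta$ only fixes which side is critical. The rotated point $e^{-\ti\theta}(y+2\pi\ti n)$ could approach the negative real axis only if $y+2\pi\ti n$ approached the ray $-e^{\ti\theta}\mathbb R_{>0}$, that is, the ray of argument $\theta+\pi$, which lies in the lower-left quadrant. On that ray, the points with imaginary part $-\pi$ have real part $-\pi\cot\theta$. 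The condition $Y<\pi\cot\theta$ is exactly what keeps the half-strip region at a positive distance from this ray, both for $n=-1$ and for all other $n\neq0$. I would make this quantitative to obtain an explicit $\phi<\pi$. Large $|y|$ is harmless: when $\Re y\to+\infty$, the rotated arguments approach $-\theta$, which is not near $\pi$. This yields the uniform bound and completes the proof.
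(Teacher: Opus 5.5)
Your proposal is correct and is essentially the paper's proof. You split off the $n=0$ term of \eqref{log-product}, keep $f_N(y/\beta)$ in closed form via \eqref{stirling-general}, and bound the remaining sum by $C|\beta|^{2N+1}\sum_{n\neq0}|y+2\pi\ti n|^{-2N-1}$ using $|\Im y|\le\pi$. Your sector argument is the same as the paper's observation about where the cuts $2\pi\ti n+\beta\mathbb R_{\le0}$, $n\neq0$, enter the strip: the ray $\beta\mathbb R_{\le0}$ meets $\Im z=-\pi$ at $\Re z=-\pi\cot\theta$, and the strict inequality $Y<\pi|\cot\theta|$ yields one uniform opening angle.
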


To clarify the statement, 
if we truncate the right-hand side of \eqref{uniform-asymptotics} at $k=N$, then the difference between the two sides can be estimated
by $D|\beta|^{2N+1}$, where $D=D(N,Y,\theta)$ is independent of $\beta$ and $y$ subject to \eqref{beta-y-region}.
The condition \eqref{beta-y-region} can be weakened but has been chosen for simplicity.

\begin{proof}
 As discussed above, 
\eqref{log-product} is valid as long as $y$ avoids 
$\mathbf C_n:=2\pi\ti n+\beta \mathbb R_{\leq 0}$ for all $n\in\mathbb Z$. The cut $\mathbf C_0$ intersects the strip 
$|\Im y |\leq\pi$, which leads to the condition $y\notin\beta\mathbb R_{\leq 0}$. If $\theta=\arg\beta>0$, then  $\mathbf C_n$ also intersect the strip if $n>0$. The cut $\mathbf C_1$ enters the strip at the point
$-\pi\cot\theta+\ti\pi$ and the other ones do so further to the left. Similarly, if $\theta<0$, $\mathbf C_{-1}$ enters at $\pi\cot\theta-\pi\ti$ and the  cuts with $n<-1$  further to the left. Hence, the condition $\Re y\geq -Y>-\pi|\cot\theta|$ is sufficient to avoid all cuts with $n\neq 0$.  

We write the right-hand side of \eqref{log-product} as
\begin{equation}\label{uniform-decomposition}
\sum_{k=0}^{2N}\frac{B_{k}\Li_{2-k}(e^{-y})(-\beta)^{k-1}}{k!}
-
f_N\left(\frac y\beta\right)
-\sum_{n\neq 0}f_N\left(\frac{y+2\pi\ti n}\beta\right),\end{equation}
where the final sum is viewed as a remainder term. 
It is known that $|f_N(x)|\leq C|x|^{-2N-1}$, uniformly in any sector $|\arg x|\leq\phi<\pi$. To apply this estimate, $x$ must avoid a sector along each $\mathbf C_n$ with $n\neq 0$.
Since we have strict inequality in $Y<\pi|\cot\theta|$, it follows from the discussion above that this can be achieved with the same angle chosen for each sector. We can then estimate
\begin{equation}\left|\sum_{n\neq 0}f_N\left(\frac{y+2\pi\ti n}{\beta}\right)\right|\leq
 C|\beta|^{2N+1} \sum_{n\neq 0} \frac 1{|y+2\pi \ti n|^{2N+1}}.
 \end{equation}
Since 
$|\Im y|\leq \pi$ implies $|y+2\pi\ti n|\geq(2|n|-1)\pi$, the latter sum is bounded independently of $y$.
It follows that
\begin{equation}\label{log-product-estimate}
\log(e^{-y};e^{-\beta})_\infty
=\sum_{k=0}^{2N}\frac{B_{k}\Li_{2-k}(e^{-y})(-\beta)^{k-1}}{k!}-
f_N\left(\frac y\beta\right)+R_N,
\end{equation}
with $|R_N|\leq D|\beta|^{2N+1}$, where $D$ is independent of $\beta$ and $y$.
 Inserting the explicit expression for $f_N$ from \eqref{stirling-general} gives \eqref{uniform-asymptotics}. 
\end{proof}

In view of its uniform nature,
Corollary \ref{corollary-uniform} 
can be used to obtain the asymptotics of $\log (e^{-y};e^{-\beta})_\infty$ in various scaling regimes $y=y(\beta)$. 
As an illustration, we  work out the complete asymptotics 
 when $y\sim \beta^c$ with 
$c\geq 0$. As we discuss below, the cases $c=0$ and $c=1$ are known
from the literature, but even then our method is new.  

\begin{corollary}\label{theorem-c>=0}
Let $c\geq 0$  and fix a value of $\arg\beta\in(-\pi/2,\pi/2)$. If $c=0$,
assume that $x\notin 2\pi\ti\mathbb Z+\beta\mathbb R_{\leq 0}$. If $c>0$, assume that $x\notin \beta^{1-c}\mathbb R_{\leq 0}$. 
Then, as $\beta\rightarrow 0$,
$F(\beta)=\log(e^{-x\beta^c};e^{-\beta})_\infty$ is given by the following asymptotic series. If $c=0$,
\begin{subequations}\label{asymptotics-four-regimes}
\begin{equation}\label{asymptotics-fixed-y}F(\beta)
    \sim 
    \sum_{k=0}^{\infty}\frac{(-1)^{k-1}B_{k}\Li_{2-k}(e^{-x})}{k!}\,\beta^{k-1}.
    \end{equation}
    If $0<c<1$, 
    \begin{multline}\label{asymptotics-small-c}
F(\beta)\sim -\frac{\pi^2}{6}\,\beta^{-1}
+x\left(1-\log x-c\log\beta\right)\beta^{c-1}+\frac c2\,\log\beta+\frac 12\log x
\\
-\sum_{k=1}^\infty\frac{B_{2k}}{2k(2k-1)x^{2k-1}}\,\beta^{(1-c)(2k-1)}
-\sum_{n=1}^\infty\sum_{k=0}^{n+1}\frac{B_kB_nx^{n+1-k}}{k!n(n+1-k)!}\,\beta^{(1-c)(k-1)+cn}.
    \end{multline}
    If $c=1$,
\begin{equation}\label{asymptotics-c-1} F(\beta)\sim -\frac{\pi^2}{6}\,\beta^{-1}+\left(\frac 12-x\right)\log\beta-\log\Gamma(x)+\frac 12\log(2\pi)
-\sum_{n=1}^{\infty}\frac{B_nB_{n+1}(x)}{n(n+1)!}\,\beta^n,
\end{equation}
where
$B_n(x)=\sum_{k=0}^n\binom nk B_kx^{n-k}$
are  Bernoulli polynomials. Finally, if $c>1$,
\begin{multline}\label{asymptotics-large-c}
F(\beta)\sim-\frac{\pi^2}{6}\,\beta^{-1}+\left(c-\frac 12\right)\log\beta +\log x+\frac 12\log(2\pi)+x(\gamma-\log\beta)\beta^{c-1}\\
-\sum_{k=2}^\infty\frac{\zeta(k)(-x)^k}{k}
\beta^{(c-1)k}
-\sum_{n=1}^\infty\sum_{k=0}^{n+1}\frac{B_kB_nx^{n+1-k}}{k!n(n+1-k)!}\,\beta^{k-1+c(n+1-k)},
    \end{multline}
    \end{subequations}
    where $\zeta(k)$ are Riemann zeta values and $\gamma$ is Euler's constant. 
In \eqref{asymptotics-fixed-y}, $Li_1(e^{-x})$
    and $\Li_2(e^{-x})$ denote the branches that are
    real-valued for $x>0$ and analytic for
    $x\notin 2\pi\ti\mathbb Z+\beta\mathbb R_{\leq 0}$.
     In \eqref{asymptotics-small-c}--\eqref{asymptotics-large-c}, $\log x$ and $\log\Gamma(x)$ denote the branches that are
     real-valued for $x>0$ and analytic for 
  $x\notin \beta^{1-c}\mathbb R_{\leq 0}$.
\end{corollary}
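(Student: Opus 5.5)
The plan is to substitute $y=x\beta^c$ into the uniform expansion of Corollary \ref{corollary-uniform}, or more precisely into its truncated form \eqref{log-product-estimate}, and then re-expand each term in powers of $\beta$. First we check that $y=x\beta^c$ lies in the region \eqref{beta-y-region} for small $\beta$. For $c>0$ we have $y\to 0$. The condition $y\notin\beta\mathbb R_{\le0}$ is exactly $x\notin\beta^{1-c}\mathbb R_{\le 0}$. For $c=0$ we first shift $x$ by a multiple of $2\pi\ti$ to get $|\Im x|\le\pi$; then $\Re y\ge -Y$ holds for a suitable $Y$ once $\beta$ is small. Since the bound $|R_N|\le D|\beta|^{2N+1}$ is uniform, we may truncate at any $N$ and expand only the explicit terms.

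\emph{Case $c=0$.} Here $y=x$ is fixed and $y/\beta\to\infty$ in a sector. We therefore use \eqref{log-product-estimate} directly and bound $f_N(x/\beta)=\mathcal O(\beta^{2N+1})$ by Stirling's estimate. Combined with $B_k=0$ for odd $k\ge3$, this gives \eqref{asymptotics-fixed-y} at once.

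\emph{Case $c=1$.} Now $y/\beta=x$ is fixed. In \eqref{uniform-asymptotics} the Stirling terms become $\tfrac12\log 2\pi-x+(x-\tfrac12)\log x-\log\Gamma(x)$. We expand $\Li_2(e^{-y})$ and $\log(1-e^{-y})$ around $y=0$ using
$\Li_2(e^{-y})=\pi^2/6+y(\log y-1)-\sum_{m\ge2}\zeta(2-m)(-y)^m/m!$,
which follows from $\Li_{2-k}(e^{-y})=(k-2)!\,y^{1-k}+\text{(entire)}$ via \eqref{polylog-parfrac}. The combination $\Li_{2-2k}(e^{-y})-(2k-2)!/y^{2k-1}$ in \eqref{uniform-asymptotics} is then an entire function with a Taylor series in $y$ whose coefficients are $\zeta$ values at negative integers, i.e.\ Bernoulli numbers. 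The $\log\beta$ contributions come from $y\log y/\beta$ and $\tfrac12\log(1-e^{-y})$; together with the remaining terms they must cancel down to $(\tfrac12-x)\log\beta$. The remaining power series in $\beta$ then has to be collected into $B_nB_{n+1}(x)/(n(n+1)!)$.

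\emph{General $c>0$.} A cleaner route is to write \eqref{fast-convergence} in the form \eqref{log-product-estimate} as
\[
F=G(y,\beta)-f_N(y/\beta)+R_N,
\]
where $G$ collects the polylog terms with their singular parts at $y=0$ removed. By the Laurent expansions above, $G$ is $-\pi^2/(6\beta)$, plus explicit $\log y$ terms, plus a double power series $\sum a_{k,n}\beta^{k-1}y^{n+1-k}$ with coefficients $B_kB_n/(k!\,n\,(n+1-k)!)$. Substituting $y=x\beta^c$ produces the double sums appearing in \eqref{asymptotics-small-c} and \eqref{asymptotics-large-c}. The term $f_N(y/\beta)$ is handled separately in each regime:
\begin{itemize}
\item for $c<1$, $y/\beta=x\beta^{c-1}\to\infty$, so Stirling's series gives the sum over $B_{2k}/(2k(2k-1)x^{2k-1})$;
\item for $c=1$ it is kept exactly, which produces $\log\Gamma(x)$;
\item for $c>1$, $y/\beta\to0$, so we use $\log\Gamma(z)=-\log z-\gamma z+\sum_{k\ge2}\zeta(k)(-z)^k/k$, which produces the $\gamma$ and $\zeta(k)$ terms.
\end{itemize}
Since each truncation error is a positive power of $\beta$ that tends to zero, and $N$ is arbitrary, we obtain full asymptotic series. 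The branches of $\log x$ and $\log\Gamma(x)$ stated in the corollary follow by continuity from those fixed in Corollary \ref{corollary-uniform}.

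The main obstacle is the bookkeeping in this last step. One must verify that the $\log y$ and $\log\beta$ terms from $\Li_2$, $\Li_1$ and the Stirling part combine into the stated coefficients, such as $x(1-\log x-c\log\beta)\beta^{c-1}+\tfrac c2\log\beta+\tfrac12\log x$. One must also check that the double series reindexes into exactly the stated form, including the boundary terms $k=0$ and $k=n+1$ and the vanishing of odd Bernoulli numbers.
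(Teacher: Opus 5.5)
Your treatment of $c>0$ is essentially the paper's proof, and that part is fine. You start from the truncated uniform expansion \eqref{log-product-estimate} with $y=x\beta^c\to0$ and expand the polylogarithms at $y=0$ as in \eqref{polylog-taylor} to produce the double sums. You then treat the pole terms together with $f_N(y/\beta)$, which amount to $\log\Gamma(x\beta^{c-1})$ up to elementary terms, according to the regime: by Stirling's series for $c<1$, exactly for $c=1$, and by the Taylor series of $\log\Gamma$ at $0$ for $c>1$.

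The gap is in the case $c=0$. You reduce it to Corollary \ref{corollary-uniform} by shifting $x$ into the strip $|\Im x|\le\pi$. You then claim that $\Re y\ge -Y$ holds ``for a suitable $Y$ once $\beta$ is small''. But Corollary \ref{corollary-uniform} requires $Y<\pi|\cot\theta|$, and for $c=0$ nothing depends on $|\beta|$. Here $y=x$ is fixed, and the cuts $\mathbf C_n=2\pi\ti n+\beta\mathbb R_{\le 0}$ depend only on $\theta=\arg\beta$.

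If $\theta\neq0$, the cuts $\mathbf C_n$ with $n\neq0$ cross the strip to the left of $\Re y=-\pi|\cot\theta|$, and the statement allows $x$ there. For example, take $\theta=\pi/4$ and $x=-10$, which lies between the points where $\mathbf C_1$ and $\mathbf C_2$ meet the real axis. For such $x$ neither the hypothesis of Corollary \ref{corollary-uniform} nor its uniform bound $|R_N|\le D|\beta|^{2N+1}$ is available, so your argument does not cover them. It is fine when $\theta=0$.

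The fix is what the paper does: bypass Corollary \ref{corollary-uniform}, apply \eqref{log-product} directly, and bound the whole sum $\sum_{n\in\mathbb Z}f_N((x+2\pi\ti n)/\beta)$. Since $x$ is fixed off all cuts, $\arg(x+2\pi\ti n)\to\pm\pi/2$ as $n\to\pm\infty$, and $|\theta|<\pi/2$, all the points $(x+2\pi\ti n)/\beta$ lie in a single sector $|\arg|\le\phi<\pi$. Hence the sum is at most $C|\beta|^{2N+1}\sum_{n}|x+2\pi\ti n|^{-2N-1}$. This bound already includes the $n=0$ term $f_N(x/\beta)$.

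There are two smaller slips. First, in your expansion of $\Li_2(e^{-y})$ the sum should carry a plus sign: $\Li_2(e^{-y})=\pi^2/6+y(\log y-1)+\sum_{m\ge2}\zeta(2-m)(-y)^m/m!$, so the $y^2$ coefficient is $-1/4$, not $+1/4$. Second, $\Li_{2-k}(e^{-y})-(k-2)!\,y^{1-k}$ is not entire, because of the poles at $2\pi\ti n$ with $n\neq0$. It is analytic only for $|y|<2\pi$, which is enough since $y\to0$.
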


To give examples, the first few terms in the cases $c=1/2$ and $c=2$ are
\begin{multline}
 \log(e^{-x\beta^{\frac 12}};e^{-\beta})_\infty=-\frac{\pi^2}{6}\,\beta^{-1}
+x\left(1-\log x-\frac{\log\beta}2\right)\beta^{-\frac 12}+
\frac{\log\beta}4
\\
+\frac{\log x}2+\frac{x^2}4-\left(\frac 1{12x}+\frac x4+\frac{x^3}{72}\right)\beta^{\frac 12}+\left(\frac 1{24}+\frac{x^2}{48}\right)\beta
+\mathcal O(\beta^{3/2}),
\end{multline}
\begin{multline}
    \log(e^{-x\beta^2};e^{-\beta})_\infty=-\frac{\pi^2}{6}\,\beta^{-1}+\left(\frac 32-x\beta\right)\log\beta +\log x+\frac{\log(2\pi)}2\\
   +\left(\frac 1{24}+\gamma x\right)\beta -\left(\frac x4+\frac{\pi^2x^2}{12}\right)\beta^2+\left(-\frac{x}{144}+\frac{x^2}4+\frac{\zeta(3)x^3}{3}\right)\beta^3
   +\mathcal O(\beta^4).\label{eq:xbeta2_asymp}
\end{multline}

\begin{proof}
In the case $c=0$, we start from
\eqref{log-product} (with $y$ replaced by $x$).
As in
the proof of Corollary \ref{corollary-uniform}, 
   if $x$ avoids a sector along each cut $2\pi\ti n+\beta\mathbb R_{\leq 0}$, we can estimate
$|f_N(x)|\leq C|x|^{-2N-1}$. 
 If  
 $x$ is fixed outside the cuts, this can  be achieved with the same angle chosen for each sector. We can then estimate
\begin{equation}\left|\sum_{n\in\mathbb Z}f_N\left(\frac{x+2\pi\ti n}{\beta}\right)\right|\leq
 C|\beta|^{2N+1} \sum_{n=-\infty}^\infty \frac 1{|x+2\pi \ti n|^{2N+1}}\leq D|\beta|^{2N+1},
 \end{equation}
 where $D$ depends on $x$, $N$ and $\arg\beta$ but is independent of $|\beta|$. This gives
 \eqref{asymptotics-fixed-y}.

 If $c>0$, we use Corollary \ref{corollary-uniform}
 in the form \eqref{log-product-estimate}, that is,
 $$\log(e^{-x\beta^c};e^{-\beta})_\infty=\sum_{k=0}^{2N}\frac{B_{k}\Li_{2-k}(e^{-x\beta^c})(-\beta)^{k-1}}{k!}-f_N(x\beta^{c-1})+\mathcal O(\beta^{2N+1}).$$
 Since 
 $x\beta^c\rightarrow 0$,  the only restriction on $x$ is  $x\notin\beta^{1-c}\mathbb R_{\leq 0} $.
By \eqref{li-exp}, 
\begin{equation}\label{polylog-taylor}\Li_{2-k}(e^{-x})=\sum_{n=\max(1,k-1)}^\infty \frac{(-1)^kB_nx^{n-k+1}}{n(n-k+1)!}+\begin{cases}\frac{\pi^2}6+x\log x-x, & k=0,\\
-\log x, & k=1\\
(k-2)!\,x^{1-k}, & k\geq 2,
\end{cases}\end{equation}
with convergence for $x$ close to $0$. The sum over $n$ gives a contribution
$$-\sum_{k=0}^{2N}\sum_{n=\max(1,k-1)}^{\infty}\frac{B_kB_nx^{n-k+1}\beta^{(1-c)(k-1)+cn}}{k!n(n-k+1)!}.$$
 Here, we can restrict to $n\leq P$ for
$P$ sufficiently large, so that all omitted terms 
are $\mathcal O(\beta^{2N+1})$. 
We can then ignore the condition $k\leq 2N$, since doing so
only adds finitely many terms that are all $\mathcal O(\beta^{2N+1})$.
Changing the order of summation, this gives
\begin{multline}\label{asymptotics-c-positive}
  \log(e^{-x\beta^c};e^{-\beta})_\infty=  -\frac{\pi^2}{6\beta}+\frac c2\,\log\beta
+x\beta^{c-1}\left(1-\log x-c\log\beta\right)+\frac 12\log x
\\-\sum_{k=1}^N \frac{B_{2k}\beta^{(1-c)(2k-1)}}{2k(2k-1)x^{2k-1}}
-\sum_{n=1}^{P}\sum_{k=0}^{n+1}\frac{B_kB_nx^{n-k+1}\beta^{(1-c)(k-1)+cn}}{k!n(n-k+1)!}\\
-f_N(x\beta^{c-1})+\mathcal O(\beta^{2N+1}).
\end{multline}

In the case $0<c<1$, let $M$ be large enough so that $(1-c)(2M+1)\geq 2N+1$. Then,
\begin{align*}f_N(x\beta^{c-1})&=f_M(x\beta^{c-1})+\sum_{k=N+1}^M \frac{B_{2k}\beta^{(1-c)(2k-1)}}{2k(2k-1)x^{2k-1}}\\
&=\sum_{k=N+1}^M \frac{B_{2k}\beta^{(1-c)(2k-1)}}{2k(2k-1)x^{2k-1}}+\mathcal O(\beta^{2N+1}). 
\end{align*}
It follows that 
\begin{multline*}
  \log(e^{-x\beta^c};e^{-\beta})_\infty=  -\frac{\pi^2}{6\beta}+\frac c2\,\log\beta
+x\beta^{c-1}\left(1-\log x-c\log\beta\right)+\frac 12\log x
\\-\sum_{k=1}^M \frac{B_{2k}\beta^{(1-c)(2k-1)}}{2k(2k-1)x^{2k-1}}
-\sum_{n=1}^{P}\sum_{k=0}^{n+1} \frac{B_kB_nx^{n-k+1}\beta^{(1-c)(k-1)+cn}}{k!n(n-k+1)!}
+\mathcal O(\beta^{2N+1})
\end{multline*}
for all sufficiently large $M$ and $P$, 
which is an equivalent way of stating \eqref{asymptotics-small-c}.

In the case $c=1$, we simply insert the explicit expression for $f_N(x)$ from \eqref{stirling-general} into \eqref{asymptotics-c-positive}.  This gives \eqref{asymptotics-c-1} after simplification.

Finally, if $c>1$ we again insert \eqref{stirling-general} into \eqref{asymptotics-c-positive} and use  \cite[Eq.\ (1.17.2)]{bateman}
$$\log\Gamma(x)=-\log x-\gamma x+\sum_{k=2}^\infty\frac{\zeta(k)(-x)^k}{k}. $$


\end{proof}

Ramanujan stated the first five non-zero terms ($k=0,1,2,4,6$) in \eqref{asymptotics-fixed-y} explicitly
and also gave the general case of  \eqref{asymptotics-c-1},
see \cite[\S 27, Entry 6 and Entry 6$'$]{berndt}.  
Berndt comments that ``the form of the asymptotic expansion in Entry 6$'$ \dots 
is much inferior to that in Entry 6". This comparison does not seem fair since the two results
give the complete asymptotic expansions in two different regimes. Moak
\cite[Thm.\ 3]{moak}
 also obtained a version of 
   \eqref{asymptotics-fixed-y}, with the numerator in $\Li_{2-k}$ for $k\geq 2$ (see \eqref{polylog-rational}) defined recursively. In a form more similar to 
ours, \eqref{asymptotics-fixed-y} and \eqref{asymptotics-c-1} were obtained by 
 McIntosh \cite[Eq.\ (6.4)]{McIntosh95}, \cite[Thm.\ 2 and Thm.\ 4]{McIntosh99},
who also more generally considered the asymptotics of $(e^{-\alpha-x\beta};e^{-\beta})_\infty$.

\section{Error analysis}\label{sec:numerics}

In this section, we study the numerical accuracy of  the asymptotic series in \eqref{uniform-asymptotics} and \eqref{asymptotics-four-regimes}. 
This will be based on the heuristics that, for an asymptotic series
$$f\sim\sum_{N=0}^\infty T_N,$$
the optimal truncation error 
$$R_\ast=\left|f-\sum_{N=0}^{N_\ast} T_N\right|$$
typically appears when $|T_{N_\ast}|\approx |T_{N_\ast+1}|$ and that then
$R_\ast\lesssim |T_{N_\ast}|$.
We  make no attempts to prove such results rigorously, but
we  verify that the heuristic estimates agree rather well with numerical computations. These  computations were performed in Mathematica using high-precision arithmetic. 

Most cases will be treated by comparison with the terms
$$T_N=C(N-1)!\,t^N.$$
Since
${T_{N+1}}/{T_N}=Nt$,
the heuristic estimates give 
\begin{subequations}
   \label{heuristic-comparison}
\begin{align}
N_\ast&\approx 1/t,\\
R_\ast&\lesssim |T_{N_\ast}|\approx \frac{|C|(N_\ast-1)!}{N_\ast^{N_\ast}}\approx \frac{|C|\sqrt{2\pi}e^{-N_\ast}}{\sqrt{N_\ast}}\approx |C|\sqrt{2\pi t}\,e^{-1/t}, \end{align} 
\end{subequations}
where we  used Stirling's approximation of the factorial. 

 We start with
 the uniform asymptotic expansion in \eqref{uniform-asymptotics}, where we replace $y$ by $x$. The $k$-th term is 
\[
T_k
=
\frac{B_{2k}\,\beta^{2k-1}}{(2k)!}
\left(
\operatorname{Li}_{2-2k}(e^{- x})
-
\frac{(2k-2)!}{{x}^{2k-1}}
\right).
\]
For simplicity, we take $\beta>0$ and 
$ x>0$. It follows from \eqref{polylog-parfrac} that
\begin{align*}\Li_{2-2k}(e^{- x})-
\frac{(2k-2)!}{{x}^{2k-1}}
&=(2k-2)!\sum_{n=1}^\infty\left(\frac 1{(x+2\pi\ti n)^{2k-1}}+\frac 1{(x-2\pi\ti n)^{2k-1}}\right)\\
&=\sum_{n=1}^\infty\frac{2(2k-2)!\cos\big((2k-1)\arctan(2\pi n/x)\big)}{(x^2+4\pi^2n^2)^{(2k-1)/2}}.
\end{align*}
Let $\theta=\arctan(2\pi/x)$.
If we let $k\rightarrow\infty$ along a sequence where $ \cos\big((2k-1)\theta\big)$ is bounded away from zero, then the $n=1$ term dominates and we have
$$\Li_{2-2k}(e^{- x})-
\frac{(2k-2)!}{{x}^{2k-1}}\sim\frac{2(2k-2)!\cos\big((2k-1)\theta\big)}{(x^2+4\pi^2)^{(2k-1)/2}}. $$
Using also  the asymptotics 
\begin{equation}
B_{2k}\sim (-1)^{k-1}\frac{2(2k)!}{(2\pi)^{2k}},\label{eq:Bernoulli_n-asymptotics-large-n}
\end{equation}
which follows from \cite[Eq.\ (1.13.22)]{bateman}
\begin{equation}
B_{2k}= (-1)^{k-1}\frac{2(2k)!\,\zeta(2k)}{(2\pi)^{2k}},\label{eq:Bernoulli-zeta}
\end{equation}
we arrive at
\begin{equation}
T_k
\sim
(-1)^{k-1}\!\cos\big((2k-1)\theta\big) \frac{4(2k-2)!}{(2\pi)^{2k}}
\frac{\beta^{2k-1}}{({x}^2+4\pi^2)^{(2k-1)/2}}.\label{eq:Tk-estimate}
\end{equation}
In particular, $|T_k|\rightarrow\infty$, so the asymptotic series \eqref{uniform-asymptotics} is divergent. 

We will now estimate the optimal truncation error. 
Ignoring the sign and the cosine factor 
in \eqref{eq:Tk-estimate}
(we will comment on their effect below), 
the terms behave as
$$\frac{4(N-1)!\,\beta^N}{(2\pi)^{N+1}(x^2+4\pi^2)^{N/2}}. $$
Applying \eqref{heuristic-comparison}, with
$C=2/\pi$ and $t=\beta/2\pi\sqrt{x^2+4\pi^2}$, gives 
\begin{equation}\label{eq:uniform-kast-Tkast}N_\ast\approx \frac{2\pi\sqrt{x^2+4\pi^2}}\beta,
\qquad R_\ast\lesssim \frac{2\sqrt\beta e^{-2\pi\sqrt{x^2+4\pi^2}/\beta}}{\pi\sqrt[4]{x^2+4\pi^2}}
\qquad \text{for \eqref{uniform-asymptotics}.}
\end{equation}
 In regimes where  $x\approx 0$, this simplifies  to 
\begin{equation}N_\ast\approx \frac{4 \pi^2}{\beta},\qquad R_\ast\lesssim\frac{\sqrt{2\beta} e^{-4\pi^2/\beta}}{\pi^{3/2}}.
\label{eq:uniform-error-x=0}\end{equation}


Figure~\ref{fig:uniform-error} illustrates the error of the partial sums in \eqref{uniform-asymptotics} as a function of the truncation order
(largest exponent of $\beta$ used in the expansion). The estimates for $N_\ast$ and $R_\ast$
given in \eqref{eq:uniform-kast-Tkast} are indicated by dashed lines.
To compare with the alternative expansions \eqref{asymptotics-four-regimes} (see Figure \ref{fig:asymptotic-error}), we
consider the four cases  $y=x\beta^c$ where $c=0,\,1/2,\, 1,\,2$, with $x=3$ and $\beta=1/16$ fixed. 
When $c=2$
 there is a sharp transition from
better to worse approximation
occurring at order
$643$.
For $c=1$  we observe a different behaviour. This results from the factor $F:=(-1)^{k-1}\cos((2k-1)\theta)$ in \eqref{eq:Tk-estimate},
which was ignored in the analysis above. In the case $c=2$, $2\theta\approx 3.138$
is very near $\pi$, which means that $F$ seldom changes sign. In particular, it has constant sign near the optimal truncation point, which means that 
 the error decreases until the exact value is crossed and increases from there. 
 When $c=1$, $2\theta\approx 3.082$, so $F$ oscillates faster. In particular, it changes sign  close to the estimated optimal truncation point.
In this case the error starts building up before reaching the smallest term in modulus. When the terms change sign the error starts decreasing, before increasing again when the sum crosses the exact value for the second time. 
For smaller $c$  we observe even faster oscillations around the exact value. As $c$ decreases, the optimal truncation order  increases,
which leads to a large decrease in the truncation error.

\begin{figure}[ht]
    \centering
    \begin{subfigure}{0.48\textwidth}
    \centering
\includegraphics[width=\textwidth]{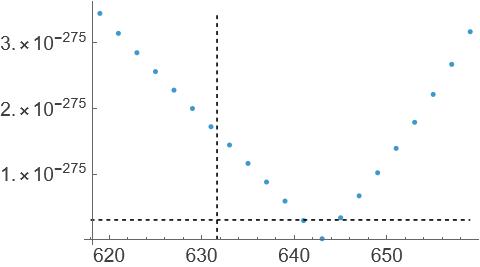}
\caption{$c=2$}
\end{subfigure}%
\hfill
 \begin{subfigure}{0.48\textwidth}
    \centering
     \includegraphics [width=\textwidth]{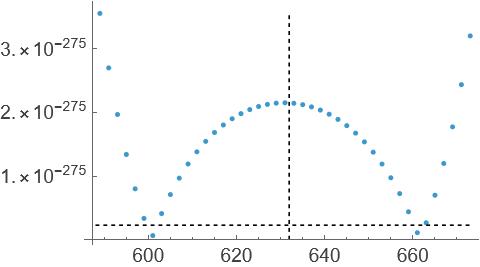}
  \caption{$c=1$}
\end{subfigure}   
     \\
     \vspace{5mm}
      \begin{subfigure}{0.48\textwidth}
    \centering
    \includegraphics[width=\textwidth]{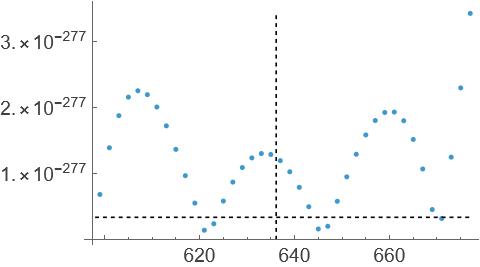}
    \caption{$c=0.5$}
\end{subfigure}
    \hfill
    \begin{subfigure}{0.48\textwidth}
    \centering
     \includegraphics[width=\textwidth]{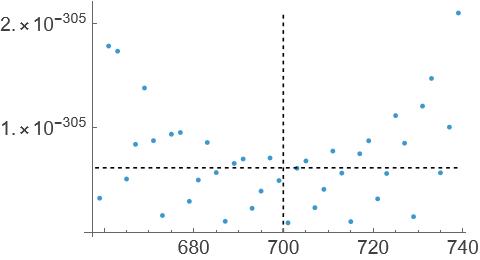}
    \caption{$c=0$}
\end{subfigure}   
    \caption{Error of the partial sums of \eqref{uniform-asymptotics} as a function of the truncation order. We take 
     $y=x\beta^c$, with  $x=3$, $\beta=1/16$ and the four values $c=0,\,1/2,\, 1,\, 2$.
     The dashed lines are the estimates for the optimal truncation order $N_\ast$ and truncation error $R_\ast$ computed from \eqref{eq:uniform-kast-Tkast}.}
    \label{fig:uniform-error}
\end{figure}




We now turn to the complete asymptotic expansions \eqref{asymptotics-four-regimes}, still assuming $x,\beta>0$.
In \eqref{asymptotics-fixed-y},  the terms behave as 
$$
\frac{B_{N+1}\,\beta^{N}}{(N+1)!}
\operatorname{Li}_{1-N}(e^{- x})\sim 
(-1)^{(N-1)/2} \frac{2(N-1)!}{(2\pi)^{N+1}}\left(\frac \beta x\right)^{N}.
$$
Using \eqref{heuristic-comparison},
with $C=1/\pi$ and $t=\beta/2\pi x$, gives the estimates
\begin{equation}
N_\ast \approx  \frac{2\pi x}{\beta},
 \qquad R_\ast\lesssim\frac{\sqrt{\beta}e^{-2\pi x/\beta}}{\pi\sqrt{x}},
\qquad\text{for \eqref{asymptotics-fixed-y}.}
\label{eq:kast-fixed-y}
\end{equation}
Compared to \eqref{eq:uniform-kast-Tkast},
the optimal truncation appears earlier in the expansion but has a larger error. The reason is that, in \eqref{asymptotics-fixed-y}, the main contribution to 
$R_\ast$ comes from replacing  the term $\log\Gamma(x/\beta)$
 with
Stirling's asymptotic series. In 
\eqref{uniform-asymptotics}, this term is assumed to be known exactly.

    




Turning to the series \eqref{asymptotics-small-c}, we claim that the main error comes from the single series. 
Since $|B_n|<4n!/(2\pi)^n$ (a consequence of \eqref{eq:Bernoulli-zeta}), the terms in the double series can be estimated by
\begin{equation}\label{double-sum-estimate}\frac{16(n-1)!\,x^{n+1-k}}{(2\pi)^{k+n}(n+1-k)!}\,\beta^{(1-c)(k-1)+cn}.\end{equation}
If the exponent of $\beta$ is $N$, then $(k,n)$ 
is on the line segment
$(1-c)(k-1)+cn=N$ where
$0\leq k\leq n+1$. It follows that $k\leq N+1$ and $n\leq (N+1-c)/c $. We can then estimate
\begin{equation}\label{dominant-factor-estimate}\frac{(n-1)!}{(n+1-k)!}\leq (n-1)^{k-2}\leq \left(\frac{N+1-2c}c\right)^{N-1}.\end{equation}
Hence,  each term can be estimated by $(N\beta)^{N}$ times subdominant corrections. If we allow the exponent 
of $\beta$ to vary in some interval near $N$, then $\mathcal O(N)$ terms contribute, so the total contribution is still roughly of size  $(N\beta)^N$. 
On the other hand, it follows from \eqref{eq:Bernoulli_n-asymptotics-large-n} that the terms in the single sum  behave as
$N^{N/(1-c)}\beta^N$ times subdominant factors. Hence, for large $N$, the contribution of the single sum dominates that of the double sum. 
We can write the  terms in the single sum  as
$$\frac{B_{N+1}\beta^{(1-c)N}}{N(N+1)x^N}\sim \frac{(N-1)!\,\beta^{(1-c)N}}{\pi^{N+1}(2 x)^N}.$$
We apply \eqref{heuristic-comparison}
with $C=1/\pi$ and  $t=\beta^{1-c}/2\pi x$, where 
we must multiply the expression for 
$N_\ast$ by $1-c$. This gives the estimates
\begin{equation}
    N_\ast \approx 2(1-c)\pi x\beta^{c-1},
 \qquad R_\ast\lesssim \frac{{\beta^{\frac{1-c}2}}e^{-2\pi x\beta^{c-1}}}{\pi\sqrt{x}},
\qquad\text{for \eqref{asymptotics-small-c}}.\label{eq:kast-small-c}
\end{equation}
Just as for \eqref{asymptotics-fixed-y}, the main error comes from using
Stirling's asymptotic series for the term
$\log\Gamma(x\beta^{c-1})$.

    

We now turn to \eqref{asymptotics-c-1}. 
The  terms are
$$T_n=\frac{B_{n}B_{n+1}(x)}{n(n+1)!}\,\beta^{n},
$$
where $n$ is even unless $n=1$.
The asymptotic behaviour is  different depending on whether $x\in\mathbb Z/2$ or not. An easy way to understand this is from
the Fourier expansion
\begin{equation}\label{bernoulli-fourier}B_{n}(x)= n\sum_{k=1}^{\lfloor x\rfloor}(x-k)^{n-1}
+2(-1)^{(n+1)/2}n!\sum_{k=1}^\infty \frac{\sin(2\pi k x)}{(2\pi k)^{n}},
\end{equation}
which holds for $x\geq 0$ and $n\geq 3$ odd. This 
 is obtained by combining \cite[Eqs.\ (1.13.6) and (1.13.15)]{bateman}.

Assume first that
$x\notin\mathbb Z/2$.
Then, for large $n$ \eqref{bernoulli-fourier}
is dominated by the first term in the infinite series, which gives (for $n\geq 2$ even) 
\begin{equation}
    B_{n+1}(x)\sim(-1)^{n/2+1}\frac{2(n+1)!}{(2\pi)^{n+1}}\sin\left(2\pi x\right),\qquad x\notin\mathbb Z/2.\label{eq:Bern-pol-asy}
\end{equation}
Using also \eqref{eq:Bernoulli_n-asymptotics-large-n}, we find  that 
$$T_n\sim \frac{4(n-1)!}{(2\pi)^{2n+1}}\,\sin(2\pi x)\beta^{n}.
$$
Applying \eqref{heuristic-comparison}, with $C=2\sin(2\pi x)/\pi$ and $t=\beta/4\pi^2$, leads to 
\begin{equation}\label{eq:kast-c=1}
N_\ast\approx\frac{4\pi^2}{\beta},\qquad R_\ast\lesssim\frac{\sqrt{2\beta}|\sin(2\pi x)| e^{-4\pi^2/\beta}}{\pi^{3/2}}\qquad \text{for \eqref{asymptotics-c-1} if\ }  x\notin\mathbb Z/2,
\end{equation}
which is similar to 
\eqref{eq:uniform-error-x=0}.

    

In the case  $x\in\mathbb Z/2$, 
 \eqref{bernoulli-fourier} reduces to the classical identities
$$B_{n}(x)=\begin{cases}
n\sum_{k=1}^{x-1}k^{n-1},&  x\in\mathbb Z_{\geq 0},\\
n\sum_{k=1}^{x-1/2}\left(k-\frac 12\right)^{n-1},&
x\in\frac 12+ \mathbb Z_{\geq 0}.
\end{cases}$$
This holds for odd $n\geq 3$. 
For $n=2$, one has the modified identities
$$B_{2}(x)=\begin{cases}
\frac 16+2\sum_{k=1}^{x-1}k,&  x\in\mathbb Z_{\geq 0},\\
-\frac 1{12}+2\sum_{k=1}^{x-1/2}\left(k-\frac 12\right),&
x\in\frac 12+ \mathbb Z_{\geq 0}.
\end{cases}$$
Using also \eqref{li1-exp}, we find that for positive integer $x$ and $\beta$ close to $0$,
\begin{align}\nonumber
\sum_{n=1}^\infty \frac{B_nB_{n+1}(x)\beta^n}{n(n+1)!}&=-\frac\beta{24}+
\sum_{k=1}^{x-1}\sum_{n=1}^\infty\frac{B_n(k\beta)^n}{n\, n!}
=-\frac{\beta}{24}-\sum_{k=1}^{x-1}\log\frac{\beta k}{1-e^{-\beta k}}\\
\label{conv-series}&=-\frac\beta{24}-\log\frac{\beta^{x-1}(x-1)!(e^{-x\beta};e^{-\beta})_\infty}{(e^{-\beta};e^{-\beta})_\infty}.
\end{align}
In particular, the asymptotic series in 
\eqref{asymptotics-c-1} is  convergent. However, \eqref{asymptotics-c-1}
does not hold as an identity. In fact,
combining \eqref{conv-series} with \eqref{dedekind} gives 
\begin{multline*}
\log(e^{-x\beta};e^{-\beta})_\infty
=\frac{\pi^2}{6}\,\beta^{-1}+\left(\frac 12-x\right)\log\beta-\log\Gamma(x)+\frac 12\log(2\pi)\\
-\sum_{n=1}^{\infty}\frac{B_nB_{n+1}(x)}{n(n+1)!}\,\beta^n
+R(\beta),
\end{multline*}
with the exact remainder term
\begin{equation}\label{exact-remainder}R(\beta)=\log(e^{-4\pi^2/\beta};e^{-4\pi^2/\beta})_\infty.\end{equation}
When $x$ is a half-integer, a similar computation gives
$$R(\beta)=\log(-e^{-4\pi^2/\beta};e^{-4\pi^2/\beta})_\infty. $$

    

Finally, we turn to \eqref{asymptotics-large-c}.
From numerical experiments, 
it appears that the 
series behaves very similarly to
the uniform expansion \eqref{uniform-asymptotics}.
Hence, by
\eqref{eq:uniform-error-x=0}, we expect that
\begin{equation}\label{eq:kast-c>1}N_\ast\approx \frac{4\pi^2}{\beta},\qquad 
R_\ast\lesssim \frac{\sqrt{2\beta} e^{-4\pi^2/\beta}}{\pi^{3/2}}
 \qquad 
\text{for \eqref{asymptotics-large-c}}.
\end{equation}
We will give an informal argument for this. 
As in the case of \eqref{asymptotics-small-c}, we
 expect that
the double sum is dominated by terms with  $n$ and $k$ both large. In particular, they are both even. If  $n=k+2j$,
then for large $n$ and fixed $j$
the terms behave as
$$\frac{(2\pi x\beta^{c-1})^{2j+1}}{(2j+1)!}\cdot \frac{4(n-1)!\beta^n}{(2\pi)^{2n+1}}.$$
Here, the second factor behaves as
\eqref{eq:kast-c>1}, whereas the first factor
is at most $\mathcal O(\beta^{c-1})$ (when $j=0$).
This indicates that the truncation error from the double sum is dominated by the error
\eqref{eq:kast-c>1} that was already present before we used the expansion \eqref{polylog-taylor}. The terms in the single series 
 behave as  $x^{N/(c-1)}\beta^N/N$ and are negligible. 

To summarize the discussion of 
\eqref{asymptotics-four-regimes}, if $c>1$, or $c=1$ and $x\notin\mathbb Z/2$, the optimal truncation error is essentially the same as in \eqref{uniform-asymptotics}, which comes from the last sum 
in \eqref{uniform-decomposition}. For $0\leq c<1$ the main error  comes from expanding the term
$\log\Gamma(x\beta^{c-1})$ as an asymptotic series. In the exceptional case $c=1$, $x\in\mathbb Z/2$, the asymptotic series is convergent, but there is still a non-trivial and explicitly known error
of size $\mathcal O(e^{-4\pi^2/\beta})$.

Figure \ref{fig:asymptotic-error}
illustrates the different cases. When $c=2$, the behaviour is similar to the
 uniform expansion \eqref{uniform-asymptotics} illustrated in Figure
\ref{fig:uniform-error}.
 The main  difference is that 
 \eqref{uniform-asymptotics}
 only contains odd powers of $\beta$, whereas 
\eqref{asymptotics-large-c} (with $c=2$)
also contains even powers. 
However, the error is dominated by terms in the double sum with $k\approx n\approx N_\ast$, which correspond to odd exponents. 
Hence, as can be seen in the figure, the terms with even exponents are negligible. The case $c=1$ illustrates the exceptional situation when the asymptotic series converges. In the two cases $c=0.5$
and $c=0$ the picture essentially illustrates the error in Stirling's asymptotic series. For $c=0.5$ we also see the effect of  negligible terms with half-integer exponents. 

\begin{figure}[ht]
    \centering
    \begin{subfigure}{0.48\textwidth}
    \centering
\includegraphics[width=\textwidth]{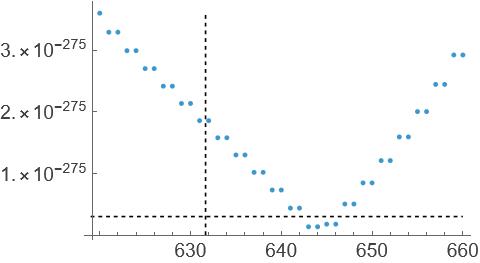}
\caption{$c=2$}
\end{subfigure}%
\hfill
 \begin{subfigure}{0.48\textwidth}
    \centering
     \includegraphics [width=\textwidth]{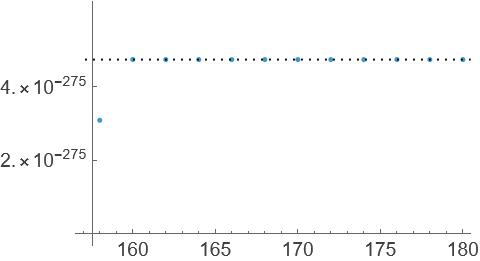}
  \caption{$c=1$}
\end{subfigure}   
     \\
     \vspace{5mm}
      \begin{subfigure}{0.48\textwidth}
    \centering
    \includegraphics[width=\textwidth]{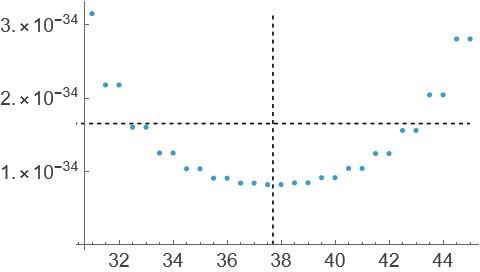}
    \caption{$c=0.5$}
\end{subfigure}
    \hfill
    \begin{subfigure}{0.48\textwidth}
    \centering
     \includegraphics[width=\textwidth]{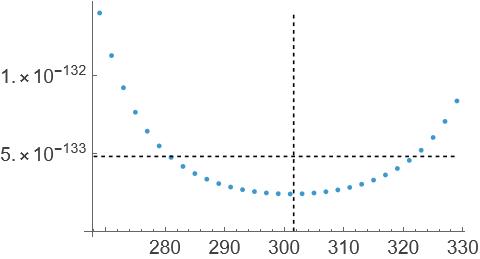}
    \caption{$c=0$}
\end{subfigure}   
    \caption{Error of the partial sums of \eqref{asymptotics-four-regimes} as a function of the truncation order. We take 
     $y=x\beta^c$, with  $x=3$, $\beta=1/16$ and  $c=0,\,1/2,\, 1,\, 2$.
For the cases $c\neq 1$,
     the dashed lines are the estimates for the optimal truncation order $N_\ast$ and truncation error $R_\ast$ computed from 
     \eqref{eq:kast-fixed-y}, \eqref{eq:kast-small-c} and
     \eqref{eq:kast-c>1}.
     For  $c=1$, 
     the dotted line indicates the exact remainder term \eqref{exact-remainder}.}
    \label{fig:asymptotic-error}
\end{figure}

In Figure \ref{fig:generic},
we illustrate the error in a case when
$c=1$ and $x\notin\mathbb Z/2$.
We see that the uniform and the complete asymptotic expansion have a 
similar, but not identical, behaviour.

\begin{figure}[ht]
    \centering
    \begin{subfigure}{0.48\textwidth}
    \centering
\includegraphics[width=\textwidth]{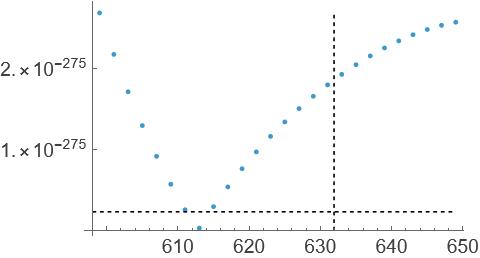}
\end{subfigure}%
\hfill
 \begin{subfigure}{0.48\textwidth}
    \centering
     \includegraphics [width=\textwidth]{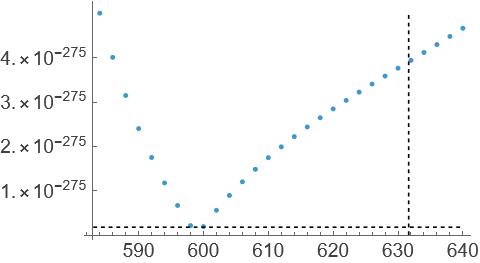}
\end{subfigure}   
    \caption{Truncation errors 
    in the case $y=x\beta$, with $\beta=1/16$ and 
    $x=2.9$. To the left, the error in the uniform expansion \eqref{uniform-asymptotics}, with estimates for the optional truncation order and error from \eqref{eq:uniform-kast-Tkast}. To the right, the error in the complete asymptotic expansion
    \eqref{asymptotics-c-1}, with estimates from \eqref{eq:kast-c=1}.
    }
    \label{fig:generic}
\end{figure}

\appendix

\section{Polylogarithms.}

For convenience of reference, we collect some classical results on the polylogarithms
$$\Li_n(x)=\sum_{k=1}^\infty\frac{x^k}{k^n},$$
see e.g.\ \cite[\S 1.11]{bateman}.
We will only encounter the case when $n\leq 2$ is an integer. 
If  $n\leq 1$, $\Li_k$ is an elementary function. Namely,
$$\Li_0(x)=\frac x{1-x},\qquad \Li_1(x)=\log \frac 1{1-x},$$
\begin{equation}\label{polylog-rational}\Li_{-n}(x)=\frac{\sum_{k=0}^{n-1}\euler nk x^{n-k}}{(1-x)^{n+1}},\qquad n\geq 1,\end{equation}
 where $\euler nk$ are Eulerian numbers. 

We need the power series expansions of $\Li_n(e^{-x})$. We start from
$$\Li_0(e^{-x})=\frac 1{e^x-1}=\sum_{k=0}^\infty\frac{B_kx^{k-1}}{k!},$$
where $B_k$ are the Bernoulli numbers. Since $\Li_n(e^{-x})'=-\Li_{n-1}(e^{-x})$, we can differentiate and integrate this identity to obtain
\begin{subequations}\label{li-exp}
\begin{align}\Li_{-n}(e^{-x})
&=\frac{n!}{x^{n+1}}+(-1)^n\sum_{k=0}^{\infty}\frac{B_{n+k+1}x^k}{(n+k+1)k!},\qquad n\geq 0,\\
\label{li1-exp}\Li_1(e^{-x})&=-\log x-\sum_{k=1}^\infty \frac{B_kx^k}{k\, k!},\\
\Li_2(e^{-x})&=\frac{\pi^2}6+x\log x-x+\sum_{k=1}^\infty \frac{B_kx^{k+1}}{k(k+1)!}.
\end{align}
\end{subequations}

Finally, we note that differentiating the classical partial fraction expansion
\begin{equation}\label{coth-parfrac}\operatorname{PV}\sum_{k\in\mathbb Z}\frac 1{x+2\pi\ti k}=
\frac{\coth(x/2)}2=
\frac 12 +\Li_0(e^{-x})\end{equation}
gives
\begin{equation}\label{polylog-parfrac}\sum_{k\in\mathbb Z}\frac 1{(x+2\pi\ti k)^{n+1}}=\frac 1{n!}\Li_{-n}(e^{-x}),\qquad n\geq 1.\end{equation}

\section{Narukawa's identity.}
\label{app:narukawa}

We will sketch how to  obtain \eqref{main-identity-convergent} as a formal limit of Narukawa's identity. 
We will use the notation  of \cite{Rains:2006dfy}.
If $\Im(\omega_1)>0$, $\Im(\omega_2)>0$ and $0<\Im(x)<\Im(\omega_1+\omega_2)$, the 
 hyperbolic  gamma function is defined by
$$\log \Gamma_h(x;\omega_1,\omega_2)=\ti \int_0^\infty \left(\frac{2x-\omega_1-\omega_2}{2t\omega_1\omega_2}-\frac{\sin(t(2x-\omega_1-\omega_2))}{2\sin(\omega_1 t)\sin(\omega_2t)}\right)\frac{dt}t.$$
Assuming also that  $\Im(\omega_1/\omega_2)>0$,
there are the alternative expressions \cite{Shintani}
\begin{multline}\label{shintani}\log\Gamma_h(x;\omega_1,\omega_2) \\
=\pi\ti P(x;\omega_1,\omega_2)+\log(e^{-2\pi\ti(x-\omega_1)/\omega_2};e^{2\pi\ti\omega_1/\omega_2})_\infty-\log(e^{-2\pi\ti x/\omega_1};e^{-2\pi\ti\omega_2/\omega_1})_\infty\\
=-\pi\ti P(x;\omega_1,\omega_2)-\log(e^{2\pi\ti x/\omega_2};e^{2\pi\ti\omega_1/\omega_2})_\infty+\log(e^{2\pi\ti(x-\omega_2) /\omega_1};e^{-2\pi\ti\omega_2/\omega_1})_\infty,
\end{multline}
where
$$P(x;\omega_1,\omega_2)=\frac{3(2x-\omega_1-\omega_2)^2-\omega_1^2-\omega_2^2}{24\omega_1\omega_2}. $$
The two expressions in \eqref{shintani}
are related by the  symmetry
$\log\Gamma_h(\omega_1+\omega_2-x)=-\log\Gamma_h(x)$ and also follow from each other using \eqref{theta-modular}.

If $|p|<1$ and $|q|<1$, the elliptic gamma function is defined by
\begin{equation}
    \Gamma_e(z;p,q)=\prod_{j,k=0}^\infty\frac{1-p^{j+1}q^{k+1}/z}{1-p^jq^kz}.\label{eq:elliptic_gamma_def}
\end{equation}
In the region $|pq|<|z|<1$, we choose the logarithm
$$\log\Gamma_e(z;p,q)=\sum_{n=1}^\infty \frac{z^n-(pq/z)^n}{n(1-p^n)(1-q^n)}.$$
Narukawa's identity can then 
 be written
\begin{multline}\label{narukawa}\sum_{n\in\mathbb Z}\log\Gamma_h(x+n;\omega_1,\omega_2)\pm \pi\ti P(x+n;\omega_1,\omega_2)\\
= \log \Gamma_e(e^{2\pi\ti x};e^{2\pi\ti\omega_1},e^{2\pi\ti\omega_2})-2\pi\ti Q(x;\omega_1,\omega_2),
 \end{multline}
where one should take the plus sign for $n\leq 0$ and the minus sign for $n>0$,
and where
$$Q(x;\omega_1,\omega_2)=\frac{(2x-\omega_1-\omega_2+1)(-(2x-\omega_1-\omega_2+1)^2+\omega_1^2+\omega_2^2+1)}{48\omega_1\omega_2}. $$
If one inserts the expressions \eqref{shintani} in
\eqref{narukawa} and take the exponential of both sides, one obtains after simplification 
a modular transformation for the elliptic gamma function  due to Felder and Varchenko
\cite[Thm.\ 4.1]{felder-varchenko},
\begin{multline*}e^{-2\pi\ti Q(x;\omega_1,\omega_2)}\Gamma_e(e^{2\pi\ti x};e^{2\pi\ti\omega_1},e^{2\pi\ti\omega_2})\\
=\Gamma_e(e^{-2\pi\ti (x+1)/\omega_1};e^{-2\pi\ti\omega_2/\omega_1},e^{-2\pi\ti/\omega_1})\Gamma_e(e^{2\pi\ti x/\omega_2};e^{2\pi\ti\omega_1/\omega_2},e^{-2\pi\ti/\omega_2}).
\end{multline*}
That is, in the generic case  $\omega_1/\omega_2\notin\mathbb R$, Narukawa's identity is  equivalent to 
the Felder--Varchenko transformation. 

It is clear from \eqref{q-pochhammer} and \eqref{eq:elliptic_gamma_def} that
\begin{equation}
    \Gamma_e(z;0,q)=\frac 1{(z;q)_\infty}.
\end{equation}
This can be given a heuristic QFT interpretation as follows. The elliptic gamma function $\Gamma_e(z;p,q)$ is, up to a Casimir-type factor, the BPS partition function of a 4d $\mathcal{N}=1$ chiral multiplet on a primary Hopf surface $\mathcal{M}_{p,q}$ \cite{Assel:2014paa} with complex-structure moduli $p,q$. The Hopf surface $\mathcal{M}_{p,q}$ is the complex surface obtained from $\mathbb{C}^2\setminus\{0\}$ by the identification $(z_1,z_2)\sim(p z_1,q z_2)$. It is diffeomorphic to $S^3\times S^1$, as can be seen from the parametrization (see e.g.~\cite{Closset:2013vra})
\begin{equation*}
    z_1=p^x\cos\frac{\theta}{2}\,e^{i\varphi},\quad z_2=q^x\sin\frac{\theta}{2}\,e^{i\chi}.
\end{equation*}
Here $x\sim x+1$ parametrizes the circle, while $\theta\in[0,\pi]$, $\varphi\sim\varphi+2\pi$, $\chi\sim\chi+2\pi$ parametrize the $S^3.$ The argument $z$ of $\Gamma_e(z;p,q)$ arises from the holonomy of a background gauge field that the chiral multiplet has unit charge under. The limit $p\to0$ degenerates the angle $\varphi$: while we started with $S^3$ as an $S^1\times S^1$ fibered over an interval, we are left with the angle $\chi$ fibered over the interval $\theta\in[0,\pi]$, giving us a disk.\footnote{We thank Cyril Closset for explaining to us the details of this degeneration.}
As usual with such reductions (see e.g.~\cite{Dedushenko:2023cvd}) the emerging boundary condition on $\partial D^2$ is Neumann. The degeneration preserves a transversely holomorphic foliation (THF) \cite{gomez1980transversal} determined by the surviving complex coordinate $z_2$.
The 3d BPS partition function depends holomorphically on the modulus $q$ of $z_2$  \cite{Closset:2013vra}. Therefore
$\mathcal{M}_{p,q}\xrightarrow{p\to0}M_q\,,$
with $M_q$ the topologically $D^2\times S^1$ space with THF modulus $q.$ Reduction of the 4d $\mathcal{N}=1$ chiral multiplet gives a 3d $\mathcal{N}=2$ chiral multiplet, whose BPS partition function on $M_q$ with Neumann boundary conditions is indeed given, up to a Casimir-type factor, by  $\frac 1{(z;q)_\infty}$ \cite{Yoshida:2014ssa}.

To see how Narukawa's formula  may yield our identity in the limit $p\to 0$,
note that \cite[Prop.\ III.6]{Ruijsenaars}
\begin{equation}\label{hyprat}\lim_{\Im\omega_1\rightarrow\infty} \log\Gamma_h(x;\omega_1,\omega_2)-\left(\frac{x}{\omega_2}-\frac 12\right)\log\frac{2\pi\omega_2}{\omega_1}=\log\Gamma\left(\frac x{\omega_2}\right)-\frac 12\log(2\pi).\end{equation}
Suppose that we substitute in \eqref{narukawa} $(x,\omega_1,\omega_2)=(\ti y/2\pi,\ti\alpha/2\pi,\ti\beta/2\pi)$ and then let $\Re\alpha\rightarrow\infty$.  
Then, the $\log \Gamma_e$-term tends to $-\log(e^{-y};e^{-\beta})_\infty$ and, up to correction terms, the $\log\Gamma_h$-terms to $\log\Gamma((y-2\pi\ti n)/\beta)$.
Hence, one may expect to obtain \eqref{main-identity-convergent}. However,
 the  terms on the left of \eqref{narukawa}, as well as the $Q$-term on the right, blow up in the limit. We will remove this divergence by adding  suitable correction terms.

To motivate  our choice of regularization  we  
appeal to \eqref{shintani}.
If $\Im(\omega_1/\omega_2)\rightarrow \infty$, 
the $q$-Pochhammer symbols with $q=e^{2\pi\ti\omega_1/\omega_2}$ decay
but those with $q=e^{-2\pi\ti\omega_2/\omega_1}$
blow up.  We  approximate the divergent 
terms using (cf.\ \eqref{asymptotics-fixed-y})
$$\log (e^{-y};e^{-\beta})_\infty=-\frac{\Li_2(e^{-y})}{\beta}-\frac{\Li_1(e^{-y})}2-\frac{\Li_0(e^{-y})}{12}\,\beta+\mathcal O(\beta^2). $$
That is, for $n>0$  we modify \eqref{narukawa} by adding the correction terms
$$-\frac{\omega_1}{2\pi\ti \omega_2}\Li_2(e^{-2\pi\ti(x+n)/\omega_1})-\frac 12\Li_1(e^{-2\pi\ti (x+n)/\omega_1})-\frac {\pi\ti\omega_2}{6\omega_1}\Li_0(e^{-2\pi\ti (x+n)/\omega_1}). $$
In the case $n\leq 0$, we also use
$$\log(e^{-y-\beta};e^{-\beta})_\infty=\Li_1(e^{-y})+\log(e^{-y};e^{-\beta})_\infty,$$
to see that the appropriate correction terms are
$$ \frac{\omega_1}{2\pi\ti \omega_2}\Li_2(e^{2\pi\ti(x+n)/\omega_1})-\frac 12\Li_1(e^{2\pi\ti (x+n)/\omega_1})+
\frac {\pi\ti\omega_2}{6\omega_1}\Li_0(e^{2\pi\ti(x+n)/\omega_1}).$$
This leads us to consider the regularized series 
\begin{multline}\label{regular}S=\sum_{n\in\mathbb Z} 
\log\Gamma_h(x+n;\omega_1,\omega_2)\pm \pi\ti P(x+n;\omega_1,\omega_2)\\
\pm \frac{\omega_1}{2\pi\ti \omega_2}\Li_2(e^{\pm 2\pi\ti(x+n)/\omega_1})-\frac 12\Li_1(e^{\pm 2\pi\ti (x+n)/\omega_1})\pm \frac {\pi\ti\omega_2}{6\omega_1}\Li_0(e^{\pm 2\pi\ti(x+n)/\omega_1}),
 \end{multline}
where we choose  plus signs for $n\leq 0$ and minus signs for $n>0$. 
We will show that the term-wise limit $\Im(\omega_1)\rightarrow\infty$ of \eqref{regular}
 is \eqref{main-identity-convergent}. 

Using \eqref{li-exp} it is straightforward to check that, regardless of the sign,
\begin{multline*}  \pm \pi\ti P(x;\omega_1,\omega_2)
\pm \frac{\omega_1}{2\pi\ti \omega_2}\Li_2(e^{\pm 2\pi\ti x/\omega_1})-\frac 12\Li_1(e^{\pm 2\pi\ti x/\omega_1})\pm \frac {\pi\ti\omega_2}{6\omega_1}\Li_0(e^{\pm 2\pi\ti x/\omega_1})\\
=\frac{x}{\omega_2}-\frac{\omega_2}{12x}+\left(\frac 12-\frac{x}{\omega_2}\right)\log\left(\frac{2\pi x}{\omega_1}\right)+\mathcal O(\omega_1^{-1}).
\end{multline*}
Using also \eqref{hyprat}, it follows that the terms in \eqref{regular} tend to
$f((x+n)/\omega_2)$, where  $f$ is as in \eqref{stirling}.

On the other hand, it follows from \eqref{narukawa} that 
$$S=\log \Gamma_e(e^{2\pi\ti x};e^{2\pi\ti\omega_1},e^{2\pi\ti\omega_2})
+R, $$
where 
\begin{align}
\nonumber R&=-2\pi\ti Q(x;\omega_1,\omega_2)+\frac{\omega_1}{2\pi\ti\omega_2}\sum_{k=0}^\infty
\left(-\Li_2(e^{-2\pi\ti (x+k+1)/\omega_1})+\Li_2(e^{2\pi\ti (x-k)/\omega_1})\right)\\
\nonumber&\quad-\frac 12\sum_{k=0}^\infty
\left(\Li_1(e^{-2\pi\ti (x+k+1)/\omega_1})+\Li_1(e^{2\pi\ti (x-k)/\omega_1})\right)\\
\label{correction}&\quad+\frac{\pi\ti\omega_2}{6\omega_1}\sum_{k=0}^\infty
\left(-\Li_0(e^{-2\pi\ti (x+k+1)/\omega_1})+\Li_0(e^{2\pi\ti (x-k)/\omega_1})\right).
\end{align}
To compute this correction term we use the modular transformation  for Jacobi theta functions, in the form
\begin{multline}\label{theta-modular}(e^{-(x+1)\beta};e^{-\beta})_\infty(e^{x\beta};e^{-\beta})_\infty\\
= e^{-\frac{\pi^2}{3\beta}-\pi\ti\left(x+\frac 12\right)+\left(\frac{x^2}2+\frac x2+\frac1{12}\right)\beta}
(e^{2\pi \ti x};e^{-4\pi^2/\beta})_\infty(e^{-2\pi\ti x-4\pi^2/\beta};e^{-4\pi^2/\beta})_\infty\end{multline}
or,  taking logarithms,
\begin{multline}\label{logsum}\sum_{k=0}^\infty\left(\Li_1(e^{-(x+k+1)\beta})+ \Li_1(e^{\beta(x-k)})\right)
= \frac{\pi^2}{3\beta}+\pi\ti\left( x+\frac12\right)-\left(\frac{x^2}2+\frac x2+\frac 1{12}\right)\beta\\
+\sum_{k=0}^\infty\left(\Li_1(e^{2\pi\ti x-4\pi^2k/\beta})+\Li_1(e^{-2\pi\ti x-4\pi^2(k+1)/\beta})\right)
.\end{multline}
In particular, as $\beta\rightarrow 0$,
\begin{subequations}\label{polylogsums} 
\begin{equation}\sum_{k=0}^\infty\left(\Li_1(e^{-(x+k+1)\beta})+ \Li_1(e^{\beta(x-k)})\right)=
\frac{\pi^2}{3\beta}+\pi\ti\left( x+\frac12\right)+\Li_1(e^{2\pi\ti x})+\mathcal O(\beta).
 \end{equation}
Differentiating \eqref{logsum} in $x$ similarly leads to
\begin{equation}\beta\sum_{k=0}^\infty\left(-\Li_0(e^{-(x+k+1)\beta})+ \Li_0(e^{\beta(x-k)})\right)\\
= \pi\ti+2\pi\ti \Li_0(e^{2\pi\ti x})+\mathcal O(\beta)\end{equation}
and, taking the primitive function that vanishes for $x=-1/2$,
\begin{multline}\frac 1\beta \sum_{k=0}^\infty\left(-\Li_2(e^{-(x+k+1)\beta})+ \Li_2(e^{\beta(x-k)})\right)\\
=
\frac{\pi^2(x+1/2)}{3\beta}
+\pi\ti\left(\frac{x^2}2+\frac x2+\frac 1{12}\right)+\frac {\Li_2(e^{2\pi\ti x})}{2\pi\ti}+\mathcal O(\beta).\end{multline}
\end{subequations}
Inserting the expressions \eqref{polylogsums}
into \eqref{correction}, a straight-forward computation 
shows that  only the polylogarithmic terms survive in the limit. That is,
$$\lim_{\Im\omega_1\rightarrow\infty}R= 
\frac{\Li_2(e^{2\pi\ti x})}{2\pi\ti\omega_2}-\frac{\Li_1(e^{2\pi\ti x})}{2}
+\frac{2\pi\ti \omega_2\Li_0(e^{2\pi\ti x})}{12}.
$$
We conclude that the termwise limit $\Im \omega_1\rightarrow\infty$ of \eqref{regular} is
$$\sum_{n\in\mathbb Z}f\left(\frac{x+n}{\omega_2}\right)=-\log(e^{2\pi\ti x};e^{2\pi\ti\omega_2})_\infty+ \frac{\Li_2(e^{2\pi\ti x})}{2\pi\ti\omega_2}-\frac{\Li_1(e^{2\pi\ti x})}{2}
+\frac{2\pi\ti \omega_2\Li_0(e^{2\pi\ti x})}{12},$$
which is equivalent to \eqref{main-identity-convergent}. To rigorously prove \eqref{main-identity-convergent} from \eqref{narukawa}, one needs to justify interchanging the limit and sum. We will not work out the details since we have already given two proofs of \eqref{main-identity-convergent}.

\printbibliography

\end{document}